\documentclass[11pt,twoside]{article}
\usepackage{latexsym}
\usepackage{amssymb,amsbsy,amsmath,amsfonts,amssymb,amscd,stmaryrd}
\usepackage{epsfig, graphicx,mathrsfs}
\usepackage{color}
\setlength{\oddsidemargin}{0mm}
\setlength{\evensidemargin}{0mm}
\setlength{\topmargin}{5mm}
\setlength{\textheight}{22cm}
\setlength{\textwidth}{17cm}

\parindent 10pt

\newcommand{\commentout}[1]{}
\newcommand{\R}{\mathbb{R}}
\newcommand{\N}{\mathbb{N}}

\newcommand {\ffi}  {\varphi}

\newcommand {\Chi} {{\bf \raise 2pt \hbox{$\chi$}} }
\newcommand{\dd}{\mathrm{d}}
\renewcommand{\L}{\textnormal{L}}

\newcommand{\D}{\textnormal{D}}
\renewcommand{\H}{\textnormal{H}}
\newcommand{\W}{\textnormal{W}}

\newcommand{\beq}{\begin{equation}}
\newcommand{\eeq}{\end{equation}}
\newcommand{\bea} {\begin{array}{rl}}
\newcommand{\eea} {\end{array}}
\newcommand{\bepa}{\left\{ \begin{array}{l}}
\newcommand{\eepa} {\end{array}\right.}
\newtheorem{theorem}{Theorem}[section]
\newtheorem{lemma}[theorem]{Lemma}
\newtheorem{definition}[theorem]{Definition}
\newtheorem{remark}[theorem]{Remark}
\newtheorem{proposition}[theorem]{Proposition}

 \newcommand{\qed}{{ \hfill
                        {\unskip\kern 6pt\penalty 500
                        \raise -2pt\hbox{\vrule\vbox to 6pt{\hrule width 6pt
                        \vfill\hrule}\vrule} \par}   }}
\title{Entropic structure and duality for multiple species cross-diffusion systems}
                       \author{Thomas Lepoutre\footnote{INRIA and Universit\'e Claude Bernard Lyon 1, CNRS UMR 5208, Institut Camille Jordan, 43 blvd. du 11 novembre 1918, F-69622 Villeurbanne cedex, France} \,and Ayman Moussa\footnote{Sorbonne Universit\'es, UPMC Univ Paris 06 \& CNRS, UMR 7598 LJLL, Paris, F-75005, France}}
\begin{document}
\maketitle
\abstract{This paper deals with the existence of global weak solutions for a wide class of (multiple species) cross-diffusions systems. The existence is based on two different ingredients: an entropy estimate giving some gradient control and a duality estimate that gives naturally $\L^2$ control. The heart of our proof is a semi-implicit scheme tailored for cross-diffusion systems firstly defined in \cite{dlmt} and a (nonlinear Aubin-Lions type) compactness result developped in \cite{mou,ACM} that turns the (potentially weak) gradient estimates into almost everywhere convergence. We apply our results to models having an entropy relying on the \emph{detailed balance condition} exhibited by Chen \emph{et. al.} in \cite{chen2016global}.}
\section{Introduction}
In 1979, Shigesada, Kawasaki and Terramoto introduced in \cite{Shigesada1979} the following system (that we denote SKT), on $Q_T:=[0,T]\times\Omega$ where $\Omega\subset\R^d$ is some regular bounded open set
\begin{align*}
\left\{
\begin{array}{l}
\partial_t u_1-\Delta \Big[(d_1+a_{11}u_1+a_{12}u_2)u_1\Big]= u_1(\rho_1-s_{11}u_1-s_{12}u_2),\\
\\
\partial_t u_2-\Delta \Big[(d_2+a_{22}u_2+a_{21}u_1)u_2\Big]= u_2(\rho_2-s_{21}u_1-s_{12}u_2).
\end{array}
\right.
\end{align*}
The latter aims at describing the behavior of two populations (through their density functions $u_1,u_2\geq 0$) involving different mechanisms: self-diffusion ($a_{11}$, $a_{22}$ terms), cross-diffusion ($a_{12},a_{21}$ terms) and growth terms, modelling reproduction ($\rho_1,\rho_2$ terms) or  competition ($s_{ij}$ terms).
The existence theory for the corresponding Cauchy boundary value problem is a rich saga. As far as classical solutions are concerned, the cornerstone of the theory is Amman theorem \cite{amann88,amann90,amann90b} which ensures local existence of solutions and gives also a criterion to check for possible global solution (which amounts to control some Sobolev norms of the solution). As a matter of fact, up to now classical global solutions are only known to exist under strong assumptions on the coefficients, for instance in the case of a weak coupling in the diffusion matrix ($a_{21}=0$ above) known as the triangular case (see \cite{HoanNguPha} for a recent result) or equal diffusion rates without self-diffusion, see \cite{kim1984smooth,lou2015global}.  Concerning weak solutions, it is a striking fact that until the beginning of the 2000's, no global solutions were known for the (full) SKT model. The breakthrough occured in several steps by  J\"ungel \emph{et. al.} in \cite{Galiano_Num_Math,Chen2004,Chen2006}, the core of the construction being the discovery of the following entropy for the system 
$$
\mathcal{E}(t):=\int_\Omega a_{21}(u_1\log u_1-u_1+1)+a_{12}(u_2\log u_2-u_2+1).
$$
As detailed in \cite{Chen2006}, this convex functional is controlled over the time through
\begin{align*}
\mathcal{E}'(t)  + \mathcal{D}(t) \leq C(1+\mathcal{E}(t)),
\end{align*}
where $\mathcal{D}(t)$ is some dissipative (nonnegative) term. The entropy control leads to $u_i\log u_i \in\L^\infty(0,T;\L^1(\Omega))$ and one can also recover (at least) from the dissipative term that $\nabla\sqrt{u_i} \in\L^2(Q_T)$. The previous \emph{a priori} estimates pave the way to the (strong) convergence of adequate approximating procedures. This is not specific to the SKT model and has been transposed since $2006$ to several variations of this system, see subsection \ref{subsec:ent} for more details.

\vspace{2mm}

This manuscript is devoted to an existence theorem for generalizations of the SKT model which take the form
\begin{align}\label{eq:cross}
\partial_t u_i-\Delta (p_i(U)u_i)=r_i(U)u_i,
\end{align}
where $1\leq i\leq I$, $U=(u_i)_{1\leq i\leq I}$,
\begin{equation}\label{eq:p_i_reg}
p_i\in \mathscr{C}^0(\R_+^I,\R_+)\cap\mathscr{C}^1((\R_+^*)^I,\R_+),
\end{equation}
 and the reaction terms are continuous functions on $\R^I_+$ that can have for instance form
\begin{equation}\label{eq:reac_form}
r_i(U)=\rho_i-\sum_{j=1}^I c_{ij}u_j^{\alpha_{ij}},\quad \rho_i,c_{ij},\alpha_{ij}\geq 0,\quad  \alpha_{ij}<1.
\end{equation}
Introducing $A(U):=(p_i(U)u_i)_{1\leq i\leq I}$ and $R(U) := (r_i(U)u_i)_{1\leq i\leq I}$, the set of scalar equations \eqref{eq:cross} is equivalent to the vectorial one 
\begin{align}
\label{eq:crossvect}
\partial_t U - \Delta \big[A(U)\big] = R(U).
\end{align}
We will use both formulations in the sequel, keeping in mind that capital letters refer to vectors and lowercase to scalars. 

\textbf{Notations: } In all what follows, we denote by $Q_T=(0,T)\times \Omega$ the parabolic cylinder. The space $\H^{-1}(\Omega)$ is the dual of the elements of $\H^1(\Omega)$ having $0$ average, an element $u\in\H^{-1}(\Omega)$ is thus characterized by 
\begin{align*}
\forall \ffi\in\H^1(\Omega),\quad \int_\Omega (\varphi-\overline{\varphi})u\leq \|\varphi\|_{\H^{1}(\Omega)}\|u\|_{\H^{-1}(\Omega)},
\end{align*}
where $\overline{\ffi}$ is the average of $\ffi$ on $\Omega$. For two vectors $X=(x_i)_i$ and $Y=(y_i)_i$ of $\R^I$ we write $X\leq Y$ if and only if the inequality is satisfied for each of their components, and use the same convention for $<$. The tensor $X:Y$ is the square-matrix $(x_i y_j)_{i,j}$. Finally, if $U$ is a vector-valued or matrix-valued function defined on $Q_T$ and $\textnormal{E}$ is some vector space of (scalar) functions defined on $Q_T$, we write simply $U\in \textnormal{E}$ to specify that each components of $U$ belongs to $\textnormal{E}$.
\subsection{Weak solutions and entropy structure}\label{subsec:ent}
Let's recall how can one recover the entropy structure exhibited in \cite{Chen2006} when dealing with a general system as \eqref{eq:crossvect}. Taking formally the inner product of \eqref{eq:crossvect} by $\nabla \mathcal{H}(U)$ for some function $\mathcal{H}:(\R_+^*)^I \rightarrow \R$ we get by a standard computation (with the usual repeated index convention)
\begin{align*}
\frac{\dd}{\dd t}\int_\Omega \mathcal{H}(U)  + \int_\Omega \langle \partial_j U,\D^2(\mathcal{H})(U) \D(A)(U) \partial_j U\rangle = \int_\Omega \nabla \mathcal{H}(U) \cdot R(U),
\end{align*}
where $\D(A)$ and $\D^2(\mathcal{H})$ are respectively the jacobian and hessian matrix of $A$ and $\mathcal{H}$. With a slight abuse of notations we rephrase the previous identity: 
\begin{align*}
\frac{\dd}{\dd t}\int_\Omega \mathcal{H}(U)  + \int_\Omega \langle \nabla U, \D^2(\mathcal{H})(U)\D(A)(U) \nabla U\rangle = \int_\Omega \nabla \mathcal{H}(U) \cdot R(U),
\end{align*}
which justifies the following definition 
\begin{definition}[Entropy]
\label{def:ent}We say that the system has an \emph{entropy function}, if there exists a function $\mathcal{H}:(\R_+^*)^I\rightarrow \R_+$ which is $\mathscr{C}^2$, convex and such that for any $X=(x_i)_i>0$
\begin{align*}
\D^2(\mathcal{H})(U)\D(A)(X)\text{ is  positive-semidefinite}.
\end{align*}
\end{definition}
In particular if $\mathcal{H}$ is an entropy function satisfying furthermore 
\begin{equation*}
\nabla \mathcal{H}(U)\cdot R(U)\leq C(1+\mathcal{H}(U)),
\end{equation*}
this leads (formally) to the entropy inequality 
\begin{equation}\label{eq:entropy_ineq}
\frac{\dd}{\dd t}\int_\Omega \mathcal{H}(U)+\underbrace{\int_\Omega \langle \nabla U, \D^2(\mathcal{H})(U)\D(A)(U)\nabla U\rangle }_{\geq 0}\leq C\Big(1+\int_\Omega \mathcal{H}(U)\Big),
\end{equation}
where the braced term of the l.h.s. is the generalization of the dissipation term $\mathcal{D}(t)$ of J\"ungel \emph{et. al.} Depending on the situation, \eqref{eq:entropy_ineq} allow an adequate control (in terms of oscillations and concentration) of all the nonlinear terms in order to build weak solutions. After \cite{Chen2006} several works focused on systems exhibiting this type of structure and extracted (gradient-like) information from this entropy estimate \cite{Jungel_Stelzer,jungel2016cross,Zamponi,dlmt,DesLepMou}.  Note also that in some specific cases, boundedness can arise from the entropy control \cite{jungbound}.

\subsection{Duality estimates and cross-diffusion}
 In some cases, one needs an additional estimate to deal with concentration issues (\emph{i.e.} non equi-integrability). In such a situation a powerful tool are the duality estimates introduced by Michel Pierre and Didier Schmitt \cite{PiSc}, see \cite{DesLepMou,dlmt,Lepoutre_JMPA} for examples of use in the context of cross-diffusion models. Let us recall briefly a paradigm of such estimates 
\begin{lemma}\label{lem:dualite}
Fix $\rho\geq 0$ and consider an integrable function $\mu$,  satisfying $\mu\geq \nu>0$ on $Q_T$. Any nonnegative smooth solution $v$ of
\begin{align*}
\partial_t v-\Delta \big[ \mu v \big] & \leq \rho v, \text{ on }Q_T\\
v(0)&=v_0,\\
 \partial_n (\mu v)&=0,\text{on }\partial\Omega,
\end{align*}
satisfies the \emph{a priori} $\L^2(Q_T)$ estimate
$$
\int_{Q_T} \mu v^2\leq e^{2\rho T}\left(\|v_0- \overline{v_0}\|_{\H^{-1}(\Omega)}^2+(\bar v^0)^2\int_{Q_T}\mu\right).
$$
\end{lemma}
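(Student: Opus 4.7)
The plan is to absorb the lower order term by a standard exponential substitution and then run a time-integration duality trick in the spirit of Pierre-Schmitt, relying on the sign of $\mu V$ to pick up the good dissipation at the end.

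\medskip

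First I would set $V := e^{-\rho t} v$. A direct computation shows $\partial_t V - \Delta(\mu V) \leq 0$ on $Q_T$, with $V(0)=v_0\geq 0$, $V\geq 0$, and the Neumann boundary condition $\partial_n(\mu V) = 0$ is preserved. Since $v^2 = e^{2\rho t} V^2 \leq e^{2\rho T} V^2$, it is enough to prove the estimate with $v$ replaced by $V$ and with the exponential prefactor set to $1$. This reduces the problem to a genuine homogeneous cross-diffusion inequality, which is more convenient for the duality step.

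\medskip

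Second, I would introduce the time primitive $U(t,x):=\int_0^t \mu(s,x)V(s,x)\,ds$, which is nonnegative and satisfies $U(0)=0$, $\partial_t U = \mu V \geq 0$, and $\partial_n U = \int_0^t \partial_n(\mu V)\,ds = 0$. Integrating the reduced inequality $\partial_t V \leq \Delta(\mu V)$ from $0$ to $t$ (the integral and the Laplacian commute because the solution is smooth) gives the pointwise bound $V(t)-v_0\leq \Delta U(t)$. Multiplying by $\mu V\geq 0$ and integrating over $Q_T$, one obtains
\begin{equation*}
\int_{Q_T}\mu V^2 \;\leq\; \int_{Q_T}\mu V\,v_0 \;+\; \int_{Q_T}\mu V\,\Delta U.
\end{equation*}
The crucial computation is that the last integral is non-positive: two integrations by parts (using $\partial_n U = 0$) yield $\int_\Omega \mu V\,\Delta U = \int_\Omega \partial_t U\,\Delta U = -\frac{1}{2}\frac{\dd}{\dd t}\int_\Omega |\nabla U|^2$, so after integrating in time and using $U(0)=0$,
\begin{equation*}
\int_{Q_T}\mu V\,\Delta U \;=\; -\tfrac12\,\|\nabla U(T)\|_{\L^2(\Omega)}^2.
\end{equation*}
Moreover $\int_{Q_T}\mu V\,v_0 = \int_\Omega v_0\,U(T)$, so we arrive at the working inequality $\int_{Q_T}\mu V^2 + \frac{1}{2}\|\nabla U(T)\|_{\L^2}^2 \leq \int_\Omega v_0\,U(T)$.

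\medskip

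The main obstacle is bounding the right-hand side by quantities that involve only the data. I would split $v_0 = \overline{v_0} + (v_0 - \overline{v_0})$. The zero-mean part gives
\begin{equation*}
\int_\Omega (v_0-\overline{v_0})\,U(T) \;=\; \int_\Omega (v_0-\overline{v_0})\bigl(U(T)-\overline{U(T)}\bigr) \;\leq\; \|v_0-\overline{v_0}\|_{\H^{-1}(\Omega)}\,\|\nabla U(T)\|_{\L^2(\Omega)},
\end{equation*}
using the $\H^{-1}/\H^1$ duality recalled in the notation paragraph, and Young's inequality absorbs the resulting $\frac{1}{2}\|\nabla U(T)\|_{\L^2}^2$ into the left-hand side. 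The constant part becomes $\overline{v_0}\int_\Omega U(T) = \overline{v_0}\int_{Q_T}\mu V$, which by Cauchy-Schwarz on the factorisation $\mu^{1/2}\cdot \mu^{1/2} V$ is bounded by $\overline{v_0}\,\bigl(\int_{Q_T}\mu\bigr)^{1/2}\bigl(\int_{Q_T}\mu V^2\bigr)^{1/2}$; a second Young's inequality then absorbs $\frac{1}{2}\int_{Q_T}\mu V^2$ into the left-hand side. After these two absorptions one is left with exactly $\int_{Q_T}\mu V^2 \leq \|v_0-\overline{v_0}\|_{\H^{-1}}^2 + (\overline{v_0})^2\int_{Q_T}\mu$, and multiplying by $e^{2\rho T}$ to undo the initial substitution gives the announced estimate. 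The delicate points are (i) choosing the primitive $U=\int_0^t \mu V$ rather than a dual backward PDE, which is what produces the favourable sign in the cross term, and (ii) ordering the two Young inequalities so that both the gradient term and the $\mu V^2$ term on the right end up on the left.
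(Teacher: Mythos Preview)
Your argument is correct and is precisely the classical Pierre--Schmitt duality computation: the exponential substitution to kill the zeroth-order term, the time primitive $U=\int_0^t \mu V$, the identity $\int_\Omega \partial_t U\,\Delta U=-\tfrac12\frac{\dd}{\dd t}\|\nabla U\|_{\L^2}^2$, and the final splitting of $v_0$ into its mean and its oscillation are exactly what produce the stated constants.

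As for the comparison: the paper does not actually give a proof of this lemma. It is stated as a known result, with an explicit attribution to Pierre and Schmitt \cite{PiSc} in the surrounding text, and is then used as a black box (and in a discrete version, estimate \eqref{for38}) throughout; only the corollary Lemma~\ref{lem:dualsum} is proved in the Appendix, assuming Lemma~\ref{lem:dualite}. So there is no ``paper's own proof'' to compare against, but what you wrote is the standard argument that the citation points to, and it reproduces the exact constants in the statement. One cosmetic remark: the hypothesis $\mu\geq\nu>0$ is not used in your estimate itself (only the nonnegativity of $\mu V$ is); it is there to guarantee that the equation is genuinely parabolic so that smooth solutions make sense.
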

\begin{remark}
Many progress have been made recently concerning these estimates. For instance, if $p$ is assumed to be bounded, one can actually recover an estimate in some $\L^p(Q_T)$ space with $p>2$, see \cite{canizo2014improved} for more details. 
\end{remark}
In the case of continuous coefficients $p_i$, applying this result for
\begin{align*}
u:=\sum_{i=1}^I u_i,\quad \mu:=\frac{\sum_{i=1}^I p_iu_i}{\sum_{i=1}^I u_i},
\end{align*}
 leads (more details in the appendix) to the following useful \emph{a priori} bound
\begin{lemma}\label{lem:dualsum}
Assume $r_i(U)\leq \rho$ for any $U\geq 0$. If for all $i$ we have $p_i\in\mathscr{C}^0(\R_+^I)$, then any nonnegative smooth solution $(u_i)_{1\leq i\leq I}$  of \eqref{eq:cross} with initial condition $(u_i^0)_{1\leq i\leq I}\in\H^{-1}(\Omega)\cap\L^1(\Omega)$ and boundary conditions $\partial_n(p_i(U)u_i) = 0$ satisfies 
\begin{align}
\label{ineq:dualsum1}\int_{Q_T}\left(\sum_{i=1}^I u_i\right)\left(\sum_{i=1}^I p_i(U) u_i\right)\leq C,
\end{align}
where the constant $C$ depends only on $\Omega,T,(u_i^0)_{1\leq i\leq I},\rho$ and the functions $p_i$. 
\end{lemma}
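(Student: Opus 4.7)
The strategy is to apply Lemma \ref{lem:dualite} to the scalar total density $u := \sum_{i=1}^I u_i$ with the effective coefficient $\mu := \bigl(\sum_i p_i(U) u_i\bigr)/u$ suggested in the paragraph preceding the statement. Summing the equations \eqref{eq:cross} over $i$ gives
$$
\partial_t u - \Delta(\mu u) = \sum_i r_i(U) u_i,
$$
and the hypothesis $r_i(U) \leq \rho$ together with $u_i \geq 0$ turns this into the inequality $\partial_t u - \Delta(\mu u) \leq \rho u$. The no-flux boundary condition $\partial_n(\mu u) = \sum_i \partial_n\bigl(p_i(U) u_i\bigr) = 0$ follows by summation, and the summed datum $u^0$ inherits $\L^1 \cap \H^{-1}$ regularity from the $u_i^0$.

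Applying Lemma \ref{lem:dualite} to $v = u$ with this $\mu$ yields
$$
\int_{Q_T}\Bigl(\sum_{i} u_i\Bigr)\Bigl(\sum_i p_i(U) u_i\Bigr) \;=\; \int_{Q_T} \mu\, u^2 \;\leq\; e^{2\rho T}\Bigl(\|u^0 - \overline{u^0}\|_{\H^{-1}(\Omega)}^2 + (\overline{u^0})^2 \int_{Q_T} \mu\Bigr),
$$
which is exactly \eqref{ineq:dualsum1} once the right-hand side is seen to depend only on the listed parameters. The $\H^{-1}$ and mean contributions are handled by summing the assumed bounds on the $u_i^0$; finiteness of $\int_{Q_T} \mu$ comes from the pointwise bound $\mu \leq \max_i p_i(U)$ together with the continuity of each $p_i$ evaluated at the (smooth) solution $U$.

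The one subtlety, and the main obstacle, is that $\mu$ may vanish at points where all the $p_i(U) u_i$ do, so the strict positivity hypothesis $\mu \geq \nu > 0$ required by Lemma \ref{lem:dualite} is not directly available. The standard remedy is an $\varepsilon$-regularization $\mu \leadsto \mu + \varepsilon$ (equivalently, replacing each $p_i$ by $p_i + \varepsilon$ in the summed equation, which preserves the form $\partial_t u - \Delta((\mu+\varepsilon)u) \leq \rho u - \varepsilon \Delta u$ amenable to the same duality argument), applying the lemma for $\varepsilon > 0$, and passing to the limit $\varepsilon \to 0^+$ by monotone convergence on the left and dominated convergence on $\int_{Q_T}(\mu+\varepsilon)$ on the right. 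This produces the announced bound with a constant depending only on $\Omega$, $T$, the $u_i^0$, $\rho$ and the $p_i$'s, completing the proof.
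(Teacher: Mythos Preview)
Your argument has a genuine gap at the step where you claim the right-hand side ``is seen to depend only on the listed parameters.'' The duality estimate from Lemma~\ref{lem:dualite} gives
\[
\int_{Q_T} \mu\, u^2 \;\leq\; e^{2\rho T}\Bigl(\|u^0-\overline{u^0}\|_{\H^{-1}(\Omega)}^2 + (\overline{u^0})^2 \int_{Q_T}\mu\Bigr),
\]
and the factor $\int_{Q_T}\mu$ on the right still involves the solution. Your justification ``$\mu\leq\max_i p_i(U)$, with $p_i$ continuous and $U$ smooth'' shows this integral is \emph{finite}, but the bound it produces depends on $\sup_{Q_T}\|U\|$, i.e.\ on the solution itself. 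The whole point of the lemma is an \emph{a priori} estimate with constant depending only on $\Omega,T,(u_i^0)_i,\rho$ and the functions $p_i$, so this is precisely the step that needs an argument.

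The paper closes this by a bootstrap. Using continuity of the $p_i$ on all of $\R_+^I$, one defines
\[
M(R):=\sup_{\sum_i x_i\leq R}\frac{\sum_i p_i(X)x_i}{\sum_i x_i},
\]
and splits
\[
\int_{Q_T}\mu \;=\; \int_{Q_T}\mu\,\mathbf{1}_{u>R} + \int_{Q_T}\mu\,\mathbf{1}_{u\leq R} \;\leq\; \frac{1}{R^2}\int_{Q_T}\mu\, u^2 \;+\; M(R)\,|Q_T|.
\]
For $R$ large enough (depending only on $e^{2\rho T}(\overline{u^0})^2$) the first term on the right is absorbed into the left side of the duality inequality, and the remaining constant involves only the data and $M(R)$. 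This is exactly where the hypothesis $p_i\in\mathscr{C}^0(\R_+^I)$ is genuinely used, as the paper emphasises. Your $\varepsilon$-regularisation for the lower bound on $\mu$ is a reasonable side remark, but it is not the real obstacle here.
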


A schematic proof of this lemma is postponed to the appendix, we emphasize that the continuity on the whole set $\R_+^I$ of the $p_i$ is critically used here (which excludes, at least at first sight the case studied in \cite{Zamponi}). 

\begin{remark}
In particular, a suitable discrete version of this estimate will hold for the semi-discrete approximation (see estimate \eqref{for38}). This estimate will be of crucial to avoid equi-integrability issues in the final passage to the limit.
\end{remark}
  
\subsection{Statement of the main theorem}
To state our main theorem, we need the introduction of a strengthened entropy notion: 
\begin{definition}[Uniform entropy]
\label{def:ent:uni} An entropy in the sense of Definition \ref{def:ent} is called \emph{uniform}, if there exists continuous functions $f_i:\R_+^* \rightarrow \R_+^*$ such that for all $X=(x_i)_i>0$
\begin{align*}
\D^2(\mathcal{H})(X)\D(A)(X) \geq \textnormal{Diag}(f_i(x_i))^2,
\end{align*}
in the sense of symmetric matrices.
\end{definition}
The proof of the following Theorem relies on the previous entropy and dual estimates together with the approximation procedure introduced in \cite{dlmt} (see Section \ref{sec:scheme}). 
\begin{theorem}\label{thm:main}
  Let $\Omega$ be a smooth domain. Assume the coefficients satisfy: 
\begin{itemize}
\item[\textnormal{\textbf{H1}}] The functions $p_i$ satisfy \eqref{eq:p_i_reg} and the $r_i$ are continuous from $\R_+^I$ to  $\R$.
\item[\textnormal{\textbf{H2}}] For all $i$, $p_i$ is lower bounded by some positive constant $\alpha>0$ and $r_i$ is upper bounded by a positive constant $\rho>0$. 
\item[\textnormal{\textbf{H3}}] $A$ is a homeomorphism from $\R_+^I$ to itself.
\end{itemize}  
  Assume the existence of a uniform entropy function $\mathcal{H}$ (in the sense of Definition \ref{def:ent:uni}) satisfying for some $C >0$ and any $0\leq X=(x_i)_{1\leq i\leq I}$
\begin{align}
\label{ineq:H2} \nabla \mathcal{H}(X) \cdot R(X) &\leq C\left(1+\mathcal{H}(X)\right).
\end{align}
Assume finally that the function $R$ satisfies (for some norm $\|\cdot\|$ on $\R^I$)
\begin{equation}\label{eq:equiint}
\| R(X)\|=\textnormal{o}\left(\left(\sum_{i=1}^I p_i(X)x_i\right)\left(\sum_{i=1}^I x_i\right)+\mathcal{H}(X)\right), \text{ as } \|X\| \rightarrow \infty.
\end{equation}
Then, for any $0\leq U_{\textnormal{in}} \in \L^1(\Omega)\cap \H^{-1}(\Omega)$, such that $ \mathcal{H}(U_{\textnormal{in}})\in\L^1(\Omega)$, there exists $0\leq U \in\L^1(Q_T)$ such that $A(U)\in\L^1(Q_T)$ and $R(U)\in\L^1(Q_T)$ which is a weak solution system \eqref{eq:crossvect} with initial data $U_{\textnormal{in}}$ and homogeneous Neuman boundary conditions, \emph{i.e.} for  all $\Psi\in\mathscr{C}^1_c([0,T);\mathscr{C}^2(\overline{\Omega})^I)$ satisfying $\partial_n \Psi = 0$ on $\partial \Omega$, there holds 
 \begin{align*}
- \int_\Omega U_{\textnormal{in}} \cdot\Psi(0,\cdot) = \int_{Q_T} \Big(A(U) \cdot \Delta \Psi + R(U) \cdot \Psi\Big).
\end{align*}
Moreover, this solution satisfies the following estimate on $[0,T]$:
\begin{align}
\int_\Omega\mathcal{H}(U(t)) +\int_0^t\int_{\Omega} \langle \nabla U,\D^2(\mathcal{H})(U)\D(A)(U)\nabla U\rangle  &\leq (1+e^{2CT})\left(1+\int_\Omega \mathcal{H}(U_{\textnormal{in}})\right),
\label{eq:gradient_control}
\end{align}
where $C$ is the constant introduced in \eqref{ineq:H2}. 
\end{theorem}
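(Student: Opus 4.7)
The approach follows the blueprint of \cite{dlmt}: construct a semi-implicit time-discretization, extract uniform entropy and duality estimates, and pass to the limit thanks to the nonlinear Aubin--Lions-type compactness of \cite{mou,ACM}. I would fix a time step $\tau=T/N$, set $U_\tau^0=U_{\textnormal{in}}$ (possibly mollified to be strictly positive and bounded away from zero), and build iteratively a nonnegative $U_\tau^{k+1}$ solving
\begin{equation*}
\f{U_\tau^{k+1}-U_\tau^k}{\tau}- \Delta A(U_\tau^{k+1}) = R(U_\tau^k),
\end{equation*}
with homogeneous Neumann boundary conditions. Existence at each step is obtained after the change of variable $V=A(U)$, legitimate by (H3), either by a Schauder fixed-point argument on the resulting elliptic system with variable coefficients or as the minimiser of a strictly convex functional built from $\mathcal{H}$ and $A^{-1}$; positivity of $U_\tau^{k+1}$ follows by testing with its negative part together with (H2).

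Testing the discrete equation against $\nabla\mathcal{H}(U_\tau^{k+1})$, convexity of $\mathcal{H}$ yields the one-step estimate
\begin{align*}
& \int_\Omega \mathcal{H}(U_\tau^{k+1}) - \int_\Omega \mathcal{H}(U_\tau^{k}) + \tau\int_\Omega \langle \nabla U_\tau^{k+1}, \D^2(\mathcal{H})(U_\tau^{k+1})\D(A)(U_\tau^{k+1})\nabla U_\tau^{k+1}\rangle \\
& \quad \leq \tau \int_\Omega \nabla \mathcal{H}(U_\tau^{k+1}) \cdot R(U_\tau^k),
\end{align*}
and summation over $k$, combined with \eqref{ineq:H2} and a discrete Gronwall lemma, gives a discrete version of \eqref{eq:gradient_control}. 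Since the entropy is \emph{uniform} (Definition \ref{def:ent:uni}), the dissipation controls $\nabla f_i(u_{\tau,i})$ in $\L^2(Q_T)$ for every $i$. A discrete analogue of Lemma \ref{lem:dualsum} then produces a uniform $\L^1(Q_T)$ bound on $\bigl(\sum_i u_{\tau,i}\bigr)\bigl(\sum_i p_i(U_\tau) u_{\tau,i}\bigr)$ and, via (H2), on $A(U_\tau)$. Finally, testing with $\H^1$ functions yields control of the discrete time increments of $U_\tau$ in $\L^2(0,T;\H^{-1}(\Omega))$.

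Let $U_\tau$ denote the piecewise-constant interpolate. The $\L^2$ control of $\nabla f_i(u_{\tau,i})$ together with the time-regularity above allow to invoke the nonlinear Aubin--Lions result of \cite{mou,ACM}, which delivers a.e. convergence, along a subsequence, of each component $u_{\tau,i}$ to a nonnegative limit $u_i$. Continuity of $p_i$ on $\R_+^I$ and of $R$ then give $A(U_\tau)\to A(U)$ and $R(U_\tau)\to R(U)$ a.e. on $Q_T$. The duality bound together with (H2) yields equi-integrability of $A(U_\tau)$, while the growth assumption \eqref{eq:equiint}, combined with the duality and entropy estimates, yields equi-integrability of $R(U_\tau)$. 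Vitali's theorem then upgrades a.e. convergence to $\L^1(Q_T)$-convergence, and passing to the limit in the weak formulation of the scheme produces a weak solution of \eqref{eq:crossvect}; the inequality \eqref{eq:gradient_control} transfers to the limit by Fatou's lemma and lower semicontinuity of the dissipation functional.

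The main obstacle is the compactness step: the entropy dissipation only controls the gradients of the possibly degenerate transforms $f_i(u_{\tau,i})$ and not those of $u_{\tau,i}$ themselves, so classical Aubin--Lions is unavailable. This is precisely what the nonlinear variant of \cite{mou,ACM} is designed to bypass, but its hypotheses must be checked carefully against the specific structure of the $f_i$, and the argument is made delicate by the need to interface it with the duality estimate (to rule out concentration in $A(U_\tau)$ and $R(U_\tau)$) and by the fact that the entropy is a priori only defined on $(\R_+^*)^I$, so that strict positivity of the approximations must be maintained throughout and the regularisation removed only at the very end by another a.e. convergence argument.
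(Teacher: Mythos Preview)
Your overall strategy matches the paper's, but there is one genuine gap and one smaller inaccuracy.

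\medskip

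\textbf{The gap: explicit reaction term.} You discretize with $R(U_\tau^k)$ on the right-hand side, i.e.\ the reaction is taken at the \emph{old} time level, while $A$ is implicit. When you then test against $\nabla\mathcal{H}(U_\tau^{k+1})$, the right-hand side becomes
\[
\tau\int_\Omega \nabla\mathcal{H}(U_\tau^{k+1})\cdot R(U_\tau^k),
\]
which mixes two different time levels. Assumption \eqref{ineq:H2} only controls $\nabla\mathcal{H}(X)\cdot R(X)$ at a \emph{single} point $X$; it gives no bound on $\nabla\mathcal{H}(X)\cdot R(Y)$ for $X\neq Y$. Without further structural hypotheses on $\mathcal{H}$ and $R$ (none are assumed), this term cannot be absorbed by a discrete Gronwall argument, and the entropy estimate collapses. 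The paper's scheme is fully implicit,
\[
\frac{U^k-U^{k-1}}{\tau}-\Delta A(U^k)=R(U^k),
\]
precisely so that testing with $\nabla\mathcal{H}(U^k)$ produces $\nabla\mathcal{H}(U^k)\cdot R(U^k)$ on the right, to which \eqref{ineq:H2} applies directly. The existence of a solution to this fully implicit step is nontrivial (it is a genuinely nonlinear elliptic system in $U^k$) and is the content of Theorem~\ref{th:approxold}, imported from \cite{dlmt}; this is where \textbf{H3} is actually used.

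\medskip

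\textbf{A smaller point: time regularity.} You claim control of the discrete time increments in $\L^2(0,T;\H^{-1}(\Omega))$. With only the duality bound, $A(U_\tau)$ is merely in $\L^1(Q_T)$, so $\Delta A(U_\tau)$ lands in a negative Sobolev space of high order, not in $\H^{-1}$. The paper accordingly places $\partial_t\underline{U}^N$ in $\mathscr{M}^1([0,T];\H^{-m}(\Omega))$ for $m$ large; this weaker control is still enough for the compactness result of \cite{mou}. Relatedly, the dissipation bounds $f_i(u_i)|\nabla u_i|$ in $\L^2$, not $\nabla f_i(u_i)$; the paper passes to the auxiliary primitive $F(z)=\int_0^z\min(1,f_i(s))\,\dd s$ to get a genuine $\L^2$ gradient bound before invoking the Minty--Browder argument that yields a.e.\ convergence.

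\medskip

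Apart from these points, your compactness and equi-integrability steps are the same as the paper's.
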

\begin{remark}[About the hypothesis]\label{rem:hyp}
We discuss a little bit the hypothesis. 
\begin{itemize}
\item The entropy structure hypothesis is crucial to get compactness estimates on the gradients. Notice that in this regard; our assumption of uniform entropy is actually quite weak. For instance it allows cases in which  the dissipation gives bounds on quantities like $e^{-u}|\nabla u|^2$, which falls out of the scope of the standard Aubin-Lions-Simon Lemma.
 \item Continuity of the $p_i$ on the whole set $\R_+^I$ is essential to the derivation of the $\L^2$ estimate (meaning that some counterexampels could be constructed). The lower bound on the $p_i$ might possibly be relaxed but it would require substantially more work.
 \item  The upper bound on $r_i$ could probably be optimized to an upper bound for a convex combination of the $r_i$, typically $\sum_{i=1}^I u_i r_i(U)\leq \rho \sum_{i=1}^I u_i$, which is the control we actually need.
 \item  We chose hypothesis \eqref{eq:equiint} to avoid dimensional consideration and to emphasize that as far as reaction terms are concerned and \eqref{ineq:H2} is satisfied, the only issue concerns equi-integrability estimates. However, this assumption could be weakened if the combination of gradients in the dissipation and other bounds ensures the $\L^1$ equi-integrabilty of $R(U)$.
 \item  Finally hypothesis \textbf{H3} might seem restrictive but we will see in Section \ref{sec:structure} that in many cases of interests, it follows  directly from the entropic structure itself.
\end{itemize}
\end{remark}
\begin{remark}[About the type of solutions]
To emphasize the robustness of the approximation procedure introduced in \cite{dlmt}, we do not derive optimal results in terms of regularity or integrability of $U$, $A(U)$ and $R(U)$. In practice the latter are more than merely integrable and using the dissipation estimate  \eqref{eq:gradient_control}, one can enlarge the space of test functions and replace in the weak formulations the terms $p_i(U)u_i\Delta \psi_i$ by $- \nabla(p_i(U)u_i)\cdot \nabla \psi_i $. 
\end{remark}

\subsection{Application to multiple species with detail balance.}
An important part of the literature is devoted to 2 species system. Concerning models with multiple (more than 2) species, there have been some studies see \cite{2000_article_LouMarNi} for instance. These studies generally do not focus on the Cauchy problem but more on the existence of specific solutions (stationary or  periodic solutions for instance). Examples of global existence of weak solutions for multiple species can be found in \cite{dlmt} (Section 5.2) and more importantly in \cite{chen2016global}. One of the purposes of the paper is to show that  weak solutions can be obtained for systems like \eqref{eq:cross} under mainly structural hypotheses using the methods derived in \cite{dlmt}. We try to get rid as much as possible of hypotheses that arise from technical issues.
Concerning the entropy structure of more than 2 species system, a very recent work \cite{chen2016global}  identifies a structure for system with pressure of the form 
\begin{equation}\label{eq:jung_form}
p_{i}(U)=d_i+\sum_{j=1}^I m_{ij} u_j^s,
\end{equation}
where $m_{ij}\geq 0$ and $s>0$. The focus in \cite{chen2016global} is made on the entropy and all the approximation is based on a suitable construction and extension of entropic variables. The authors exhibited the following detailed balance condition which for a matrix $M=(m_{ij})$  writes
\begin{equation}
\exists \pi\in(\R_+^*)^I\,:\,\quad \forall i\not=j,\quad \pi_i m_{ij}=\pi_jm_{ji}.\label{eq:detail_balance}
\end{equation}
This assumption is not sufficient to ensure the existence of an entropy (and more interstingly global weak solutions) and the authors had to consider moreover either a dimensional constraint on the exponent $s$ or a \emph{self $>$ cross} assumption (that is: the $m_{ii}$ are larger than the $m_{ij}$ in some sense). Schematically their results allow to prove existence of global weak solutions in the following two general cases: 
\begin{itemize}
\item $\max(0,1-\frac{2}{d}) < s$ and assumption of type \emph{self $>$ cross};
\item $\max(0,1-\frac{2}{d}) < s \leq 1$ and detailed balance.
\end{itemize}
Thus, the detailed balance condition can be seen as a way to avoid an assumption of the type \emph{self $>$ cross}, but applies only for sublinear exponents. The (dimension dependent) lower-bound on the exponent is related to the fact that the regularity and the integrability of the solution is derived from the entropy inequality (see end of Section 1 in \cite{chen2016global}) and does not make advantage of the $\L^2$ structure that one exhibits when exploiting Lemma~\ref{lem:dualite}. 
In this manuscript we  identify in Section~\ref{sec:structure} natural extensions (including different exponents $s_i$) of the structure identified in \cite{chen2016global}. More importantly, we show how the use of a different approximation procedure allow to get rid of most dimensional constraint. Typically, we are able to obtain the following (non optimal) improvement of Theorem 1 and Theorem 2 of \cite{chen2016global} 
\begin{theorem}\label{thm:genjun}
Let $\Omega$ be a smooth domain. Assume the $p_i$ have form $p_i(U)=d_i+\sum_{j}m_{ij}u_j^{s_j}$, with $s_j>0$, $s_is_j\leq 1, i\not=j$ and \eqref{eq:detail_balance}. Assume the reaction have the form \eqref{eq:reac_form}.
Finally, assume $0\leq u_i^0\in \L^1(\Omega)\cap \H^{-1}(\Omega)$ and $$\int_\Omega h_i(u_i) <\infty \text{ where } h_i(u_i)=\begin{cases} \dfrac{u_i^{s_i}-s_i u_i+s_i-1}{s_i-1} \text{ if }s_i\not=1\\ u_i\log u_i-u_i+1 \text{ if } s_i=1.\end{cases}$$  Then for there exists $0\leq (u_i)_{1\leq i\leq I} \in\L^1(Q_T)$ such that $p_i(U)u_i\in\L^1(Q_T)$ and $r_i(U)u_i\in\L^1(Q_T)$ which is a weak solution system \eqref{eq:cross} with initial data $(u_i^0)_{1\leq i\leq I}$ and homogeneous Neuman boundary conditions, \emph{i.e.} for all $\psi\in\mathscr{C}^1_c([0,T);\mathscr{C}^2(\overline{\Omega})^I)$ satisfying $\partial_n \psi = 0$ on $\partial \Omega$, there holds 
 for all $1\leq i \leq I$
 \begin{align*}
- \int_\Omega u_i^0\psi(0,\cdot) = \int_{Q_T} \Big(p_i(U)u_i \Delta \psi + r_i(U)u_i\psi \Big).
\end{align*} 
Moreover, we have for some constant $C=C(T,\Omega,(u_i^0)_{1\leq i \leq I})$
$$\sum_{i=1}^I \int_\Omega h_i(u_i)+\sum_{i=1}^I \int_{Q_T} u_i^{s_i-2}|\nabla u_i|^2\leq C.$$
\end{theorem}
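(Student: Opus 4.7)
The plan is to reduce Theorem \ref{thm:genjun} to Theorem \ref{thm:main} by exhibiting an appropriate uniform entropy. Let $\pi = (\pi_i) > 0$ be the positive vector furnished by the detailed balance condition \eqref{eq:detail_balance}, and set
$$
\mathcal{H}(U) := \sum_{i=1}^I \pi_i h_i(u_i).
$$
A short computation gives $h_i''(x) = s_i x^{s_i-2}$ in every case (including $s_i = 1$), so $\mathcal{H}$ is $\mathscr{C}^2$, convex and nonnegative on $(\R_+^*)^I$, with $\D^2\mathcal{H}(U) = \textnormal{Diag}(\pi_i s_i u_i^{s_i-2})$ positive definite. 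Verifying Definition \ref{def:ent:uni} thus reduces to analyzing the matrix $\D^2\mathcal{H}(U)\D A(U)$.

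The main obstacle is precisely this entropic computation. Writing $A_i(U) = (d_i + \sum_j m_{ij} u_j^{s_j}) u_i$, differentiation and grouping produces
$$
\langle \xi, \D^2\mathcal{H}(U)\D A(U)\xi\rangle = \sum_i \pi_i s_i d_i u_i^{s_i-2}\xi_i^2 + \sum_{i,j}\pi_i m_{ij}\bigl[s_i u_i^{s_i-2} u_j^{s_j}\xi_i^2 + s_i s_j u_i^{s_i-1} u_j^{s_j-1}\xi_i\xi_j\bigr].
$$
The first sum already supplies the diagonal lower bound of Definition \ref{def:ent:uni}, with $f_i(x) := \sqrt{\pi_i s_i d_i}\, x^{(s_i-2)/2}$, positive and continuous on $\R_+^*$ provided $d_i > 0$ (implicit in H2). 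For the second sum, pairing the $(i,j)$ and $(j,i)$ contributions via $\pi_i m_{ij} = \pi_j m_{ji}$ and setting $a = \sqrt{s_i}\, u_i^{(s_i-2)/2} u_j^{s_j/2}\xi_i$ and $b = \sqrt{s_j}\, u_j^{(s_j-2)/2} u_i^{s_i/2}\xi_j$ reduces each symmetrized pair with $i \neq j$ to the quadratic form $a^2 + 2\sqrt{s_i s_j}\, ab + b^2$, whose non-negativity is equivalent precisely to the assumption $s_i s_j \leq 1$. The diagonal terms $i = j$ collapse to $2s_i(1+s_i) u_i^{2s_i-2}\xi_i^2 \geq 0$ unconditionally. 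Since $\pi_i m_{ij} \geq 0$, the whole sum is non-negative, confirming the uniform entropy structure and identifying the dissipation, after integration, as $\sum_i \int_{Q_T} \pi_i s_i d_i u_i^{s_i-2}|\nabla u_i|^2$ plus a non-negative remainder.

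The remaining hypotheses of Theorem \ref{thm:main} are routine: H1 is immediate; H2 uses $p_i \geq d_i > 0$ and $r_i(U) \leq \rho_i$ (since $c_{ij}, u_j \geq 0$); H3 is obtained from Section \ref{sec:structure}, where it is deduced from the entropic structure itself. The reaction bound \eqref{ineq:H2} follows by splitting each $u_i$ at $1$ and combining the identity $h_i'(u_i)u_i = s_i h_i(u_i) + s_i(u_i-1)$ with the sublinear growth bound $u_j^{\alpha_{ij}} \leq C(1 + h_j(u_j))$ (valid because $\alpha_{ij} < 1$ and each $h_i$ grows at least linearly at infinity), yielding $\nabla\mathcal{H}(U)\cdot R(U) \leq C(1+\mathcal{H}(U))$. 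The $\mathrm{o}$-condition \eqref{eq:equiint} is immediate since $\|R(X)\|$ grows at most like $\|X\|^{1+\max_{ij}\alpha_{ij}}$ with exponent strictly less than $2$, while $\bigl(\sum_i p_i(X) x_i\bigr)\bigl(\sum_i x_i\bigr) \gtrsim \|X\|^2$ by $p_i \geq d_i > 0$. Theorem \ref{thm:main} then yields the desired weak solution, and the final estimate $\sum_i \int_\Omega h_i(u_i) + \sum_i \int_{Q_T} u_i^{s_i-2}|\nabla u_i|^2 \leq C$ is exactly the instance of the dissipation bound \eqref{eq:gradient_control} with the explicit $\mathcal{H}$ and $f_i$ constructed above.
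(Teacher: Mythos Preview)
Your proof is correct and follows essentially the same route as the paper: define $\mathcal{H}(U)=\sum_i \pi_i h_i(u_i)$ with the detailed-balance weights, verify the uniform-entropy inequality, check \textbf{H1}--\textbf{H3} and \eqref{ineq:H2}--\eqref{eq:equiint}, then invoke Theorem~\ref{thm:main}. The only cosmetic difference is that the paper packages the entropic computation through the $2\times 2$ matrices $B^{(i,j)}$ of the Lemma in Subsection~\ref{subsec:examp} (with $q_i(x)=x^{s_i}$, so that pairwise compatibility reads $x^{s_i}y^{s_j}(1-s_is_j)\ge 0$), whereas you perform the equivalent substitution $(a,b)$ directly; note also that your diagonal contribution should read $\pi_i m_{ii}\,s_i(1+s_i)u_i^{2s_i-2}\xi_i^2$ (no factor $2$), which is harmless since only its sign matters.
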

\begin{remark}
Note that this statement does not depend on the dimension. In comparison with the results of \cite{chen2016global}, this Theorem is weaker on only one point: the constraint $s_i s_j\leq 1$ forbids the case $s_i = s_j =s >1$ considered in \cite{chen2016global}. We could of course include this type of situations in our result but under the price of a \emph{self $>$ cross} assumption. Nevertheless our result includes the scenario of one superlinear exponent $s_i>1$ if it is ``compensated'' by sublinear ones satisfying $s_j \leq s_i^{-1}$, all of this without any \emph{self $>$ cross} assumption on the coefficients. In fact, the superlinear case of \cite{chen2016global} is covered by theorem~\ref{thm:main}. A larger class is explored in Section~\ref{sec:structure}.
\end{remark}

The manuscript is divided as follows. The first two sections are devoted to the proof of Theorem \ref{thm:main}. In Section~\ref{sec:scheme}, we recall the semi-discrete approximation derived in \cite{dlmt} and apply it to our framework. We show how the \emph{a priori} entropy estimate is propagated to the semi-discrete approximation. Section~\ref{sec:aubin} focuses on the asymptotic of the sequence of approximations. We invoke a (nonlinear Aubin-Lions type) compactness result developped in \cite{mou,ACM} to obtain strong convergence and handle the concentration issues in order to conclude that the cluster point is a weak solution of our system, ending the proof of Theorem \ref{thm:main}. Finally in Section~\ref{sec:structure}, we show a general framework for multiple species for which Theorem \ref{thm:main} applies, including the case of Theorem \ref{thm:genjun}.

\section{Scheme} \label{sec:scheme}
\subsection{Semi discretization}
We follow the approximation procedure introduced in \cite{dlmt} for generic systems 
\begin{align*}
\partial_t U - \Delta[A(U)] &= R(U),\\
\partial_n A(U) &= 0,\\
U(0) &= U_{\textnormal{in}},
\end{align*}
where $A:\R_+^I\rightarrow\R_+^I$ and $R:\R_+^I \rightarrow \R^I$ take the following form $A(U) = (p_i(U)u_i)_{i}$, $R(U)=(r_i(U)u_i)_{i}$. The approximation procedure is based on the following semi-implicit scheme 
\begin{align}
\label{eq:scheme1} \frac{U^k-U^{k-1}}{\tau} -\Delta[A(U^k)] &= R(U^k) \text{ on }\Omega,\\
\label{eq:scheme2} \partial_n A(U^k) &=0 \text{ on }\partial\Omega,
\end{align}
where $1\leq k\leq N$ initialized by $U^0=U^0_N$, which is an approximation of $U_{\textnormal{in}}$ such that $U^0_N\in\mathscr{C}^0(\overline{\Omega})$ is smooth, $U^0_N\geq 1/N$ and 
\begin{align*}
\|U^0_N\|_1 &\leq \|U_{\textnormal{in}}\|_1, \\
\|U^0_N\|_{\H^{-1}(\Omega)} &\leq \|U_{\textnormal{in}}\|_{\H^{-1}(\Omega)}, \\
\|\mathcal{H}(U^0_N)\|_1 &\leq \|\mathcal{H}(U_{\textnormal{in}})\|_1. \\
\end{align*}
Equations \eqref{eq:scheme1} - \eqref{eq:scheme2} have to be understood in the following sense 
\begin{definition}\label{def:solscheme}
 Let $\tau>0$ and $0\leq U^{k-1}\in \L^\infty(\Omega)$. We say that a nonnegative vector-valued function $U^k$ is a solution of \eqref{eq:scheme1} - \eqref{eq:scheme2} if $U^k$ lies in $\L^\infty(\Omega)$, $A(U^k)$ lies in $\H^2_{\nu}(\Omega)$ and \eqref{eq:scheme1} is satisfied almost everywhere on $\Omega$.
\end{definition}

Apart from the question of convergence to a global weak solution, the very existence of the sequence $(U^k)_{1\leq k\leq N}$ is nontrivial, because \eqref{eq:scheme1} is highly nonlinear. This issue is solved in \cite{dlmt} precisely under the assumptions \textbf{H1} - \textbf{H2} - \textbf{H3} of Therorem \ref{thm:main}. Under these assumptions, we have the following result (see Theorem 2.2 of \cite{dlmt}):
\begin{theorem}\label{th:approxold}
Assume that \textnormal{\textbf{H1}}, \textnormal{\textbf{H2}}, \textnormal{\textbf{H3}} hold. Let $\Omega$ be a bounded open set of $\R^d$ with smooth boundary.  Fix $T>0$ and an integer $N$ large enough such that $\rho\tau<1/2$, where $\tau:=T/N$ and $\rho$ is the positive number defined in \textnormal{\textbf{H2}}. Fix $\eta>0$ and a vector-valued function  $\L^\infty(\Omega)\ni U^0\geq \eta$. Then there exists a sequence of positive vector-valued functions $(U^k)_{1\leq k\leq N-1}$ in $\L^\infty(\Omega)$ which solves \eqref{eq:scheme1} -- \eqref{eq:scheme2} (in the sense of Definition \ref{def:solscheme}). Furthermore, for all $k\geq 1$ and  $p\in[1,\infty[$, it satisfies the following estimates:
\begin{align}\label{es:depend_on_tau1}
U^k &\in\mathscr{C}^0(\overline{\Omega}), \\
\label{es:depend_on_tau2}U^k &\geq \eta_{A,R,\tau}  \text{ on }\overline{\Omega},\\
\label{es:depend_on_tau3} A(U^k) &\in \W^{2,p}_\nu(\Omega),
\end{align}
where $\eta_{A,R,\tau}>0$ is a positive constant depending on the maps $A$ and $R$ and $\tau$, and
\begin{align}
\label{ineq:l1} \max_{0\leq k\leq N-1}\int_\Omega U^k  &\leq  2^{2\rho \tau N} \int_\Omega U^0, \\
\label{ineq:l1bis} \sum_{k=1}^{N-1} \tau \int_\Omega \left(\rho U^k - R(U^k)\right) &\leq 2^{2\rho \tau N} \int_\Omega U^0,\\
\label{for38}
\sum_{k=0}^{N-1} \tau \int_\Omega \left(\sum_{i=1}^I u_i^k\right)\left(\sum_{i=1}^I A(U^k)_i\right) &\leq  C(\Omega,U^0,A, \rho, N\tau),
\end{align}
where $C(\Omega,U^0,A, \rho, N\tau)$ is a positive depending only on $\Omega$, $A$, $\rho$, $N\tau$ and $\|U^0\|_{\L^1\cap\H^{-1}(\Omega)}$.
\end{theorem}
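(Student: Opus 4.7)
The plan is to construct the sequence $(U^k)_{0\leq k \leq N-1}$ by induction on $k$. At each step, given $U^{k-1} \in \L^\infty(\Omega)$ bounded below by a positive constant $\eta_{k-1}$, we produce $U^k$ by a fixed-point argument applied to the quasi-linear elliptic system
\begin{align*}
U - \tau\Delta[A(U)] - \tau R(U) = U^{k-1}, \qquad \partial_n A(U) = 0.
\end{align*}
Hypothesis \textbf{H3} (that $A$ is a homeomorphism of $\R_+^I$ onto itself) is the structural key: it allows us to freeze the coefficients $p_i,r_i$ along a reference $\tilde U$ and solve componentwise the linear problems
\begin{align*}
(1-\tau r_i(\tilde U))v_i - \tau\Delta[p_i(\tilde U)v_i] = u_i^{k-1}, \qquad \partial_n[p_i(\tilde U)v_i] = 0,
\end{align*}
and then to recover a true fixed point $U^k$ with $A(U^k)=V$. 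Thanks to \textbf{H2} and to the choice $\tau\rho<1/2$, the substitution $w_i := p_i(\tilde U)v_i$ turns these into coercive scalar problems for which Lax--Milgram yields a unique nonnegative solution; a sub-solution comparison using $u_i^{k-1}\geq\eta_{k-1}$ delivers a strictly positive pointwise lower bound, while the maximum principle combined with \textbf{H2} yields a uniform $\L^\infty$ upper bound. Elliptic regularity then makes $V := (v_i)$ a compact continuous function of $\tilde U$, and Schauder's theorem produces the desired $U^k$ in the sense of Definition~\ref{def:solscheme}. The regularity statements \eqref{es:depend_on_tau1}--\eqref{es:depend_on_tau3} follow from standard bootstrapping on $-\tau\Delta[A(U^k)] = U^{k-1} - U^k + \tau R(U^k) \in \L^\infty$, using continuity of $A^{-1}$ from \textbf{H3}.

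The three a priori estimates are then extracted directly from the scheme. For \eqref{ineq:l1} I integrate \eqref{eq:scheme1} componentwise: the Neumann condition kills the Laplacian, the upper bound $r_i\leq \rho$ of \textbf{H2} gives $(1-\tau\rho)\int_\Omega u_i^k \leq \int_\Omega u_i^{k-1}$, and induction on $k$ together with $\tau\rho<1/2$ (so that $(1-\tau\rho)^{-1}\leq 2^{2\tau\rho}$) delivers the claimed exponential-type bound. Telescoping the identity $\int_\Omega U^k - \int_\Omega U^{k-1} = \tau \int_\Omega R(U^k)$ from $k=1$ to $N-1$ and adding $\tau\rho\sum_k \int_\Omega U^k$ (now controlled by \eqref{ineq:l1}) yields \eqref{ineq:l1bis}. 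Estimate \eqref{for38} is the discrete analogue of Lemma~\ref{lem:dualsum}: setting $u^k := \sum_i u_i^k$ and $\mu^k := \sum_i p_i(U^k)u_i^k/u^k \geq \alpha$, the summed scheme gives $(u^k - u^{k-1})/\tau - \Delta(\mu^k u^k) \leq \rho u^k$, and testing against the solution of a backward discrete dual problem---exactly discretising the computation sketched in the appendix---produces the $\L^2$-type bound on $\sqrt{\mu^k}\, u^k$ that amounts to \eqref{for38}.

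The main obstacle, in my view, is the one-step fixed-point argument: one must propagate simultaneously an $\L^\infty$ upper bound (to preserve invariance of the convex compact set on which Schauder is applied) and a strictly positive pointwise lower bound (so that the next step's frozen coefficients $p_i,r_i$ remain well-defined and the inversion $U = A^{-1}(V)$ via \textbf{H3} is available). Both bounds genuinely degenerate from one step to the next, and this degeneracy is unavoidable at the scheme level---hence the fact that \eqref{es:depend_on_tau2} only asserts positivity with a constant $\eta_{A,R,\tau}$ depending on $A$, $R$ and $\tau$, rather than a uniform one. This is also the reason why \textbf{H1}--\textbf{H3} are needed in their full strength here, even though the limiting problem tolerates more degeneracies: continuity and positive lower bounds on the $p_i$ yield coercivity of the auxiliary problems, the upper bound on $r_i$ controls the sign of the zero-order terms, and surjectivity of $A$ is precisely what makes the passage from $V$ back to $U$ possible.
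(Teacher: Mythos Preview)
The paper does not prove this theorem. Immediately before the statement it writes ``we have the following result (see Theorem~2.2 of \cite{dlmt})'', and no argument follows; Theorem~\ref{th:approxold} is imported from \cite{dlmt} and used as a black box for the remainder of the manuscript. There is therefore no proof in the present paper against which to compare your attempt.

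That said, your outline is in the right spirit for how such semi-discrete schemes are built---a Schauder-type fixed point on the one-step elliptic problem, step-by-step propagation of positivity and $\L^\infty$ bounds (whence the $\tau$-dependent constant $\eta_{A,R,\tau}$), and a discrete dual problem to obtain \eqref{for38}---and your remarks on why each of \textbf{H1}--\textbf{H3} is genuinely needed at the scheme level are apt. One point in your write-up is slightly inconsistent: you describe a map $\tilde U \mapsto V=(v_i)$ obtained by freezing $p_i,r_i$ at $\tilde U$ and solving the linearised scalar problems, yet then say the fixed point satisfies ``$A(U^k)=V$''. For the map as you wrote it the Schauder fixed point is simply $\tilde U=V=U^k$; the formulation in which one works with $W=A(U)$ (so that the principal part is $-\Delta W$ and \textbf{H3} is invoked to recover $U=A^{-1}(W)$) is a different, and arguably cleaner, scheme. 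You appear to be blending the two. Either route can be made to work, but closing the $\L^\infty$ invariance required for Schauder is where the technical effort lies, and that is carried out in \cite{dlmt}, not in this paper.
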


\subsection{Entropy estimate}\label{sec:entropy}

We try to keep the derivation of the entropy estimate as general as possible. The derivation of the entropy estimate is based on the fact that the approximation are implicit. Indeed, since $U^{k}$ is smooth enough and positive, it is legitimate to multiply system \eqref{eq:scheme1} by $\nabla \mathcal{H}(U^k)$ integrate on $\Omega$ and integrate by parts the gradient term, leading to the equality 
$$
\int_\Omega \nabla \mathcal{H}(U^k)\cdot(U^{k}-U^{k-1})+\tau\int_\Omega \langle \nabla U^k,\D^2(\mathcal{H})(U^k)\D(A)(U^k),\nabla U^k\rangle =\tau\nabla \mathcal{H}(U^k)\cdot R(U^k).
$$
Using \eqref{ineq:H2} we have
$$
\int_\Omega \nabla \mathcal{H}(U^k)\cdot(U^{k}-U^{k-1})+\tau \int_\Omega \langle \nabla U^k,\D^2(\mathcal{H})(U^k)\D(A)(U^k),\nabla U^k\rangle \leq C\tau(1+\mathcal{H}(U^k)).
$$
Since $\mathcal{H}$ is convex, we have
$$
\int_\Omega \nabla \mathcal{H}(U^k)\cdot(U^{k}-U^{k-1})\geq \int_\Omega \mathcal{H}(U^k)-\mathcal{H}(U^{k-1}).
$$
so that we eventually get the discrete analogous of \eqref{eq:entropy_ineq}, that is 
\begin{align}
\label{ineq:entropdis}\int_\Omega \mathcal{H}(U^k)-\int_\Omega \mathcal{H}(U^{k-1})+\tau
\int_\Omega \langle \nabla U^k,\D^2(\mathcal{H})(U^k)\D(A)(U^k),\nabla U^k\rangle\leq C\tau\left(1+\int_\Omega \mathcal{H}(U^k)\right).
\end{align}
For $\tau$ small enough, we have $C\tau<1/2$ and since $T=N\tau$, we thus infer from a discrete-type Gronwall Lemma (see for instance Lemma 3.6 of \cite{DesLepMou}) that, for all $k\in\{1,\cdots,N\}$, 
\begin{align*}
\int_\Omega \mathcal{H}(U^k) &\leq e^{2CT}\left(1+\int_\Omega \mathcal{H}(U^0_N)\right)\\
&\leq e^{2CT}\left(1+\int_\Omega \mathcal{H}(U_{\textnormal{in}})\right),
\end{align*}
from which we eventually deduce, for any $\ell\in\{1,\cdots,N\}$, after summation of \eqref{ineq:entropdis} over $1\leq k\leq \ell$, 
\begin{align}
\label{ineq:entropdisfinal}\int_\Omega \mathcal{H}(U^\ell)+\sum_{k=1}^\ell \tau\int_\Omega \langle \nabla U^k,\D^2(\mathcal{H})(U^k)\D(A)(U^k),\nabla U^k\rangle\leq \left(1+e^{2CT} \right)\left(CT+\int_\Omega \mathcal{H}(U_{\textnormal{in}})\right).
\end{align}

\section{Passing to the limit} \label{sec:aubin}
\subsection{Compactness}
In this section we are going to pass rigorously to the limit $N\rightarrow + \infty$, $\tau\rightarrow 0$ in system \eqref{eq:scheme1} - \eqref{eq:scheme2} that we rewrite here, keeping the track of its dependence w.r.t. $N$ : 
\begin{align*}
\frac{U^k_N-U^{k-1}_N}{\tau} -\Delta[A(U^k_N)] &= R(U^k_N) \text{ on }\Omega,\\
\partial_n A(U^k_N) =0 \text{ on }\partial\Omega. 
\end{align*}
It is convenient to rephrase the previous equation in terms of a continuous space variable, introducing for all $N\geq 1$ the step-in-time function
\begin{align*}
\underline{U}^N := \sum_{k=0}^{N-1} U_N^k \mathbf{1}_{(k\tau,(k+1)\tau]}(t),
\end{align*}
and extend this function by $0$ for negative times, defining $\underline{u}_i^N$ accordingly so that $\underline{U}^N = (\underline{u}_i^N)_i$. The previous set of (discrete in space) equations is actually equivalent to the single one
\begin{align*}
\partial_t \underline{U}^N  = \sum_{k=1}^{N-1} \tau((\Delta[A(U^k)]+R(U^k))\otimes \delta_{t^k} + U^0_N\otimes\delta_0 \in\mathscr{D}'((-\infty,T)\times\Omega),
\end{align*}
where $\delta_{t^k}$ is the Dirac mass at $t^k$. From estimate \eqref{for38} we get the following bounds
\begin{align}
\label{est:comp2}\Big[\underline{U}^N : A(\underline{U}^N)\Big]_N & \text{ bounded  in } \L^1(Q_T), 
\end{align}
whereas estimate \eqref{ineq:entropdisfinal} rephrases, for all $t\in[0,T]$,
\begin{multline}
\label{eq:gradient_control_dis}  \int_\Omega\mathcal{H}(\underline{U}^N(t))  + \int_{0}^t\int_\Omega \langle \nabla \underline{U}^N, \D^2 (\mathcal{H})(\underline{U}^N)\D(A)(\underline{U}^N) \nabla \underline{U}^N\rangle\\ \leq (1+e^{2CT})\left(CT+\int_\Omega \mathcal{H}(U_{\textnormal{in}})\right). 
\end{multline}
We infer in particular from the previous inequality the following bound
\begin{align}
\label{est:comp3} \Big[\langle \nabla \underline{U}^N, \D^2 (\mathcal{H})(\underline{U}^N)\D(A)(\underline{U}^N) \nabla \underline{U}^N\rangle\Big]_N & \text{ bounded in } \L^1(Q_T).
\end{align}
Note that because of Assumption \textbf{H2}, estimate \eqref{est:comp2} leads to the boundedness of $(\underline{U}^N)_N$ in $\L^2(Q_T)$. Up to some subsequence (that we don't label) we can thus assume the existence of $U\in \L^2(Q_T)$  such that 
\begin{align*}
(\underline{U}^N)_N &\operatorname*{\rightharpoonup}_N U,\text{ in } \L^2(Q_T).
\end{align*}
On the other hand, since $\mathcal{H}$ is assumed to be a uniform entropy in the sense of Definition \ref{def:ent}, and since, by \eqref{es:depend_on_tau2}, $\underline{U}^N$ takes positive values on $Q_T$, we infer the existence of continuous functions  $f_i:\R_+^* \rightarrow\R_+^*$  for all $i$   such that 
\begin{align}
\label{est:comp3bis}\langle \nabla \underline{U}^N, \D^2 (\mathcal{H})(\underline{U}^N)\D(A)(\underline{U}^N) \nabla \underline{U}^N\rangle \geq \sum_{i=1}^I f_i(\underline{u}_i^N)^2 |\nabla \underline{u}^N_i|^2.
\end{align}
Now fix $i$ and define $w_N:=\underline{u}_i^N$. Thanks to \eqref{est:comp2}, $(w_N)_N$ is bounded in $\L^2(Q_T)$. Using \eqref{est:comp3} - \eqref{est:comp3bis} we infer that $(\nabla F(w_N))_N$ is bounded in $\L^2(Q_T)$, where $F:\R_+\rightarrow\R_+$ is defined by 
\begin{align}
\label{eq:F}F(z) := \int_0^z \min(1,f_i(s)) \dd s.
\end{align}
$F$ is a strictly increasing $1$-Lipschitz function vanishing at $0$. In particular, we infer from the bound $(w_N)_N$ in $\L^2(Q_T)$ the same one for $(F(w_N))_N$. Up to a subsequence we can thus assume that $(w_N)_N$ and $(F(w_N))_N$ converge weakly to $w$ and $\widetilde{w}$ in $\L^2(Q_T)$. On the other hand, because of the equation satisfied by $\underline{u}_i^N$ and thanks to estimate \eqref{est:comp2} - \eqref{est:comp3}, one checks that $(\partial_t w_N)_N$ is bounded in $\mathscr{M}^1([0,T],\H^{-m}(\Omega))$, for $m\in\N$ large enough. We thus infer from Proposition 3 of \cite{mou} that (up to a subsequence), for any $\varphi\in\mathscr{C}^0(\overline{Q_T})$,
\begin{align}
\label{conv:comp}\int_{Q_T}  w_N F(w_N)\varphi \operatorname*{\longrightarrow}_{N+\infty} \int_{Q_T}  w \widetilde{w}\varphi.
\end{align}
At this stage we could invoke directly Proposition 1.4 [ACM], but since the framework is a little bit more general, for the reader convenience we reproduce here (a part of) the proof, which is in fact another occurence of the standard Minty-Browder or Leray-Lions trick (for the historical proof(s) see \cite{LL} and the references therein, or \cite{hunger} for a modern proof involving Young measures). One first establish that 
\begin{align*}
\int_{Q_T} \stackrel{:=h_N}{\overbrace{(F(w_N)-F(w))(w_N-w)}} &= \int_{Q_T} F(w_N)w_N+\int_{Q_T} F(w)w - \int_{Q_T}F(w_N)w-\int_{Q_T}F(w)w_N \\
& \operatorname*{\longrightarrow}_{N+\infty} \int_{Q_T} (w\tilde w -F(w)w+\tilde w w -F(w)w=0,
\end{align*}
by exploiting the $\L^2(Q_T)$ weak convergences $(w_N)_N \rightharpoonup_N w$, $(F(w_N))_N \rightharpoonup_N \widetilde{w}$, together with \eqref{conv:comp} when $\ffi=1$. Then, since $F$ is increasing, we have $h_N\geq 0$ so that the previous convergence may be seen as the convergence of $(h_N)_N$ to $0$ in $\L^1(Q_T)$. In particular, up to some subsequence, we get that that $(h_N)_N$ converges almost everywhere. Since $F$ is strictly increasing, for any sequence $(z_n)_n\in\R_+$ and $z\in\R_+$, the convergence of $(F(z_n)-F(z))(z_n-z)$ to $0$ implies that $(z_n)_n$ converges to $z$ and we recover in this way the fact that $(w_N)_N$ converges to $w$ a.e. on $Q_T$.

\subsection{Passing to the limit}
We already have the convergence of $\underline{U}^N$ to some $U$ almost everywhere. Therefore by continuity of $A$ and $R$, we have for the nonlinearities 
\begin{align*}
(\underline{U}^N, A(\underline{U}^N),R(\underline{U}^N),\mathcal{H}(\underline{U}^N))\operatorname*{\longrightarrow}_{N\rightarrow +\infty}^{\text{a.e.}} (U,A(U),R(U),\mathcal{H}(U)),
\end{align*}
so that, as said in the remark~\ref{rem:hyp}, passing to the limit is now just a question of (equi-)integrability for each of these three sequences. The first sequence $(\underline{U}^N)_N$ is bounded in $\L^2(Q_T)$ thanks to \eqref{est:comp2}, so that equi-integrability is automatic. For the second sequence $(A(\underline{U}^N))_N$, we use once more estimate \eqref{est:comp2}: since $A$ is continuous on $\R_+^I$, we infer that, for any norm $\|\cdot\|$ on $\R^I$,
\begin{align*}
\rho(R):=\max_{\|X\|\leq R}\|A(X)\|,
\end{align*} 
is a well-defined nondecreasing function of $R\geq 0$. Obviously, $\|A(X)\|>\rho(R)$ implies $\|X\|> R$, so that 
\begin{align*}
\int_{Q_T} \|A(\underline{U}^N)\|\mathbf{1}_{\|A(\underline{U}^N)\|>\rho(R)} &\leq \int_{Q_T} \|A(\underline{U}^N)\| \mathbf{1}_{\|\underline{U}^N\|>R}  \\
&\leq \frac{1}{R}\int_{Q_T} \|A(\underline{U}^N)\| \, \|\underline{U}^N\|,
\end{align*}
which goes to $0$ with $1/R$ uniformly in $N$, using \eqref{est:comp2} and the equivalence of norms in $\R^I$. As for the reaction terms, assumption \eqref{eq:equiint} allow to obtain the equi-integrability of $(R(\underline{U}^N))_N$ using the boudedness in $\L^1(Q_T)$ of $(\mathcal{H}(\underline{U}^N))_N$ and $(A(\underline{U}^N):\underline{U}^N)_N$. Eventually, using \eqref{est:comp3bis} and  \eqref{eq:gradient_control_dis}, we get for all $N$ 
\begin{align*}
  \int_\Omega\mathcal{H}(\underline{U}^N(t))  + \int_{0}^t\int_\Omega \langle \nabla \underline{U}^N, \D^2(\mathcal{H})(\underline{U}^N)\D(A)(\underline{U}^N)\nabla \underline{U}^N\rangle  \leq (1+e^{2CT})\left(CT+\int_\Omega \mathcal{H}(U_{\textnormal{in}})\right), 
\end{align*}
where we recall $\D^2(\mathcal{H})(\underline{U}^N)\D(A)(\underline{U}^N)\geq 0$, so that we get entropy estimate \eqref{eq:gradient_control} by the usual weak lower-semi continuity argument, ending the proof of Theorem \ref{thm:main}. \qed

\section{Application to separate variable cases. }\label{sec:structure}

\subsection{Examples : generalizing the use of detailed balance structure}\label{subsec:examp}
In  \cite{chen2016global},  Chen and coauthors identified the role of the detailed balance hypothesis for entropy structure of cross-diffusion systems in which the functions $p_i$ take the following form 
\begin{align*}
p_i(U)=d_i+\sum_{j=1}^I m_{ij}u_j^s.
\end{align*}
where $m_{ij}$ are nonnegative coefficients. Depending on the structure of the matrix, an entropy of the form 
$$
\mathcal{H}(U)=\sum_{i=1}^I \pi_i u_i^s,
$$
can be a uniform entropy if the the positive coefficients $\pi_i$ are adequately chosen. More precisely, it's the case if the following condition (introduced in \cite{chen2016global}) is fullfilled. 
\begin{definition}
We say that the matrix $M=(m_{ij})_{ij} \in\textnormal{M}_n(\R_+)$ satisfies the \emph{detailed balance} condition if
\begin{align}
\label{eq:db}\exists \pi\in(\R_+^*)^I\quad : \quad \forall i,j\quad  \pi_i m_{ij}=\pi_j m_{ji}.
\end{align}
\end{definition}
If $s\leq 1$, one can check that $\mathcal{H}(U)=\sum_{i=1}^I \pi_i u_i^s$ is indeed a uniform entropy for the system (with $f_i^2=sd_i u_i^{s-2}$). Let's try to generalize this procedure for other nonlinearities. For instance consider the case when the functions $p_i$ take the following form 
\begin{align*}
p_i(U)=d_i+\sum_{j=1}^I m_{ij} q_j(u_j),
\end{align*}
where $q_i\in \mathscr{C}^0(\R_+)\cap \mathscr{C}^1(\R_+^*)$ and $q_i'>0$.
This includes for instance multi-exponents cases that are $q_j(x) = x^{s_j}$. It seems unthinkable that the sole detailed balance leads to an entropy in this general framework: even when $I=2$ (that is, system with two populations), global weak solutions are (up to now) only known to exists (for the power-law case) under the condition that $s_1s_2 \leq 1$ (see \cite{dlmt}). It is therefore reasonnable to expect an extra condition if one wants to produce global weak solutions. We thus introduce the \emph{pairwise compatibility} which writes 
\begin{definition}[pairwise compatibility]
We say that the functions $q_j$ are pairwise compatible if
\begin{equation}\label{eq:pairwise}
\forall i\not=j,\quad \forall x,y>0,\quad q_i(x)q_j(y)-xyq_i'(x)q_j'(y)\geq 0. 
\end{equation}
\end{definition}

\vspace{2mm} 

This condition is reminiscent of the study performed in \cite{DesLepMou} which focused on  systems involving only two populations. We recall that in this latter framework (two populations with cross-diffusion coefficients $m_{12}q_2(u_2)$ and $m_{21}q_1(u_1)$), the entropy took the following form 
\begin{align*}
\ffi_1(u_1)+ \ffi_2(u_2),
\end{align*}  
where $\ffi_i$ is the only $\mathscr{C}^2(\R_+^*)$ function satisyfing $\ffi_i(1)= \ffi_i'(1)=0$ and
\begin{align*}
\ffi_i''(z) = \frac{q_i'(z)}{z}.
\end{align*}
Following \cite{chen2016global} a natural candidate for the entropy in our general setting would therefore be 
\begin{align}\label{eq:entpair}
\mathcal{H}(U) := \sum_{i=1}^I \pi_i \ffi_i(u_i).
\end{align}
This entropy is linked with our Theorem \ref{thm:main} through the following Lemma 
\begin{lemma}
Consider a family of nonnegative functions $(q_i)_{1\leq i\leq I}\in\mathscr{C}^0(\R_+)\cap\mathscr{C}^1(\R_+^*)$ such that $q_i'>0$. Then $\mathcal{H}$ defined by \eqref{eq:entpair} satisfies \eqref{ineq:H2}. Furthermore, if \eqref{eq:db}  and \eqref{eq:pairwise} hold, then $\mathcal{H}$ is a uniform entropy in the sense of Definition \ref{def:ent}.
\end{lemma}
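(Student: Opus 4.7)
The plan is to compute the matrix $M := \D^2(\mathcal{H})(U)\,\D(A)(U)$ explicitly, to exhibit it as a sum of pair-indexed positive semidefinite $2\times 2$ blocks plus a strictly positive diagonal remainder, and to read off the functions $f_i$ from this remainder. First I would record the three basic ingredients: from $\ffi_i''(z) = q_i'(z)/z$ one gets $\D^2(\mathcal{H})(U) = \textnormal{Diag}(\pi_i q_i'(u_i)/u_i)$; differentiating $A_i(U) = p_i(U)u_i$ gives $(\D(A))_{ij} = m_{ij}u_i q_j'(u_j)$ for $i\neq j$ and $(\D(A))_{ii} = p_i(U) + m_{ii}u_i q_i'(u_i)$; and the off-diagonal entries of the product are $M_{ij} = \pi_i m_{ij} q_i'(u_i) q_j'(u_j)$, which are symmetric in $(i,j)$ precisely because of the detailed balance condition \eqref{eq:db}.

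For the reaction inequality \eqref{ineq:H2} (which is meaningful once $R$ is specified, for instance under the form \eqref{eq:reac_form}), I would compute $\nabla\mathcal{H}(U)\cdot R(U) = \sum_i \pi_i\ffi_i'(u_i)\,r_i(U)\,u_i$ and use the growth information carried by the definition of $\ffi_i$. Since $\ffi_i''(z)=q_i'(z)/z>0$, $\ffi_i$ is convex with $\ffi_i(1)=\ffi_i'(1)=0$, and a Taylor-type argument (of the form $u\ffi_i'(u)-\ffi_i(u) = \int_1^u(u-s) q_i'(s)\,\dd s/s$) shows that $u\ffi_i'(u)$ is bounded above by an affine function of $\ffi_i(u)$ for $u$ bounded away from $0$, while $\ffi_i'$ remains locally bounded for small $u$. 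The upper bound $r_i \leq \rho$ and the sublinear growth $\alpha_{ij}<1$ of the competition terms then yield \eqref{ineq:H2} after a routine splitting according to the sign of $\ffi_i'(u_i)$.

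The heart of the proof is the pairwise decomposition of $M$. For each unordered pair $\{i,j\}$ with $i\neq j$, introduce the symmetric (by \eqref{eq:db}) $2\times 2$ matrix
\[
B^{(i,j)} := \begin{pmatrix}
\pi_i m_{ij}\, q_i'(u_i)\, q_j(u_j)/u_i & \pi_i m_{ij}\, q_i'(u_i)\, q_j'(u_j) \\
\pi_j m_{ji}\, q_j'(u_j)\, q_i'(u_i) & \pi_j m_{ji}\, q_j'(u_j)\, q_i(u_i)/u_j
\end{pmatrix}.
\]
A direct computation, again using detailed balance to rewrite $(\pi_i m_{ij})^2 = (\pi_i m_{ij})(\pi_j m_{ji})$, gives
\[
\det B^{(i,j)} = (\pi_i m_{ij})(\pi_j m_{ji})\,\f{q_i'(u_i)\,q_j'(u_j)}{u_i u_j}\,\bigl[q_i(u_i)\,q_j(u_j) - u_i u_j\,q_i'(u_i)\,q_j'(u_j)\bigr],
\]
which is nonnegative by the pairwise compatibility \eqref{eq:pairwise}; together with the nonnegative diagonal this makes $B^{(i,j)}$ positive semidefinite. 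Embedding each $B^{(i,j)}$ as a principal submatrix of a full $I\times I$ matrix (with zero entries elsewhere) and summing over the $\binom{I}{2}$ pairs, one recovers the off-diagonals of $M$ exactly, while the induced diagonal contribution at position $(i,i)$ is $\pi_i \f{q_i'(u_i)}{u_i}\sum_{j\neq i} m_{ij} q_j(u_j)$. Subtracting this from the true diagonal $M_{ii} = \pi_i\f{q_i'(u_i)}{u_i}p_i(U) + \pi_i m_{ii}\,q_i'(u_i)^2$ leaves the strictly positive residual
\[
D_{ii} = \pi_i d_i\,\f{q_i'(u_i)}{u_i} + \pi_i m_{ii}\,q_i'(u_i)\Bigl(\f{q_i(u_i)}{u_i} + q_i'(u_i)\Bigr),
\]
so that $M \geq \textnormal{Diag}(D_{ii})$ in the semidefinite order, and one may take $f_i(u_i)^2 := \pi_i d_i\, q_i'(u_i)/u_i$ to satisfy Definition \ref{def:ent:uni}.

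The only real difficulty is finding the correct pair-splitting: the entry $M_{ij} = \pi_i m_{ij} q_i'(u_i) q_j'(u_j)$ must be paired with exactly the fragment $\pi_i m_{ij} q_i'(u_i) q_j(u_j)/u_i$ of $M_{ii}$ (and with the symmetric fragment at $(j,j)$) so that the resulting $2\times 2$ block is symmetric (which forces the use of \eqref{eq:db}) and has nonnegative determinant (which forces the use of \eqref{eq:pairwise}). Once this ansatz is written down, the verification is elementary; the non-degeneracy assumption $d_i>0$ is what guarantees the residual diagonal is strictly positive, producing continuous positive $f_i$'s on $\R_+^*$ as required.
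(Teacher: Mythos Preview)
Your proposal is correct and follows essentially the same route as the paper: both compute $\D^2(\mathcal{H})(X)\D(A)(X)$, split it into a strictly positive diagonal remainder plus the pairwise $2\times 2$ blocks $B^{(i,j)}$, use \eqref{eq:db} for their symmetry and \eqref{eq:pairwise} for $\det B^{(i,j)}\geq 0$, and extract $f_i(x)^2 = \pi_i d_i\, q_i'(x)/x$. Your treatment of \eqref{ineq:H2} is also in the same spirit (the paper simply cites an earlier reference for this convex-analysis step).
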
 
\begin{proof}
First, \eqref{ineq:H2} follows from elementary convex analysis, see point $(iv)$ of Lemma 3.5 in \cite{DesLepMou} for instance. The core of the matter is of course to check that $\mathcal{H}$ is a uniform entropy. For this purpose, for $X=(x_i)_i>0$ and $Z=(z_i)_i \in\R^I$ we compute
\begin{multline}
\langle Z, \D^2(\mathcal{H})(X)\D(A)(X)Z\rangle = \\
\label{ent:compu} \sum_{i=1}^I \pi_i\frac{q_i'(x_i)}{x_i} (d_i+m_{ii}q_i(x_i)+x_i q_i'(x_i)) z_i^2+\frac{1}{2}\sum_{i\neq j} \begin{pmatrix}z_i & z_j\end{pmatrix} B^{(i,j)}\begin{pmatrix}z_i \\ z_j\end{pmatrix},
\end{multline}
where the matrix $B^{(i,j)}$ is defined by 
\begin{align*}
B^{(i,j)}=\begin{pmatrix} \pi_i m_{ij} q_j(x_j)\frac{q_i'(x_i)}{x_i} &\pi_im_{ij}q_i'(x_i)q_j'(x_j)\\ 
\pi_jm_{ji}q_i'(x_i)q_j'(x_j)&  \pi_j m_{ji} q_i(x_i)\frac{q_j'(x_j)}{x_j}\end{pmatrix}.
\end{align*}
Due to the detailed balance condition \eqref{eq:db}, $B^{(i,j)}$ is a symmetric matrix. Since $q_i(u_i),q_i'(u_i)\geq 0$ it has nonnegative trace. Finally by the pairwise compatibility condition \eqref{eq:pairwise},
$$
\det(B^{(i,j)})=\pi_i\pi_jm_{ji}m_{ij}\frac{q_i'(u_i)q_j'(u_j)}{u_iu_j}\left( q_i(u_i)q_j(u_j)-u_iu_jq_i'(u_i)q_j'(u_j)\right)\geq 0.
$$  
Therefore $B^{(i,j)}$ are symmetric nonnegative matrices. Using again that $q_i,q_i'\geq 0$, \eqref{ent:compu} leads to 
\begin{align*}
 \langle Z, \D^2(\mathcal{H})(X)\D(A)(X)Z\rangle \geq \sum_{i=1}^I d_i \frac{q_i'(x_i)}{x_i} z_i^2,
\end{align*}
that is, in the sense of symmetric matrices,
\begin{align*}
 \D^2(\mathcal{H})(X)\D(A)(X) \geq \textnormal{Diag}(f_i(x_i))^2,
\end{align*}
where $f_i$ is simply the square-root of $x_i\mapsto d_i q_i'(x_i)/x_i$. 
\qed\end{proof}
%
%

\begin{remark}
This structure covers typically the case $q_i(u_i)=u_i^s$ covered in \cite{jungel2016cross}, in the case $s\leq 1$, the case $s>1$ is covered afterwards. It also covers the case $q_i(u_i)=u_i^{s_i}$ with all the $0<s_i\leq 1$ or $s_i>0$ and $s_is_j\leq 1$ for all $i\not=j$. This is the object of theorem~\ref{thm:genjun}.
\end{remark}

\textbf{Other cases} A lot of other cases can in fact be handled. Two typical generalization can be obtained (we focus on power law coefficients in order to clarify the message): 
\begin{itemize}
\item \textbf{relaxed detailed balance and $s_is_j<1$.} In case, $s_is_j<1$, an entropy structure can be found even in absence of detailed balance condition. Condition for matrix $B^{(i,j)}$ to be semi definite can then write as 
$$
\pi_i m_{ij}\pi_jm_{ji}s_is_j-s_i^2s_j^2\left(\frac{\pi_i m_{ij}+\pi_jm_{ji}}{2}\right)^2\geq 0.
$$
This is a (not very much) lighter constraint on the matrix. 
\item \textbf{self-diffusion $>$ cross-diffusion.} In case the hypothesis $s_is_j\leq 1$ is not satisfied or if neither detailed balanced or relaxed detailed balance conditions is satisfied, the entropic structure might hold because of additional self-diffusion. Indeed, in this case, the matrix $\D(A)(U)$ can exhibit eigenvalues with negative real part which might exclude any convex entropy (it can be checked easily on 2 species system). However if $m_{ii}$ is large enough, then the potentially problematic cross term in the quadratic form \eqref{ent:compu} adapted to power type coefficients.  They are  of the  form
$$
u_i^{s_i-1}u_j^{s_j-1} z_iz_j,
$$
For large $m_{ii}$'s the positive contribution of the form $u_i^{2s_i-2}z_i^2+u_j^{2s_j-2}z_j^2$ will control these terms. We only give here a general idea of compensation by cross-diffusion, for a deeper exploration of these possibilities, we refer to \cite{chen2016global}.
\end{itemize} 
With regards to \cite{chen2016global}, the main contribution of our approach is twofolds:
\begin{itemize}
\item firstly, we avoid any condition involving the dimension, the assumptions are the same, regardless of the dimension. 
\item secondly we show that a more general structure can be identified inspired from the one introduced in \cite{chen2016global}.
\end{itemize}

\subsection{Separate variables entropy: checking \textbf{H3}}\label{sec:Hadamard}

In this section, we identify a class of models where Theorem \ref{thm:main} may apply. They include in particular the examples of Subsection~\ref{subsec:examp}. All is based on the separation of variables. Note that the very definition of uniform entropy includes somehow a separation of variables.  In all what follows, besides hypothesis of theorem~\ref{thm:main}, we will make the following additional assumptions on the functions $p_i$ and the uniform entropy $\mathcal{H}$:
\begin{align}
\label{eq:pi_separe}
p_i(X)&=d_i+\sum_{j=1}^I q_{ij}(x_j),\quad q_{ij}\geq 0,\quad q_{ij} \in \mathscr{C}^0(\R_+)\cap \mathscr{C}^1(\R_+^*),\\
\label{eq:H_separe}
\mathcal{H}(X)&=\sum_{i=1}^I \mathcal{H}_i (x_i) 
\end{align}
which means that both cross-diffusion pressures and entropy can be written in separate variables form. 
We focus on this case for one reason: as discussed in remark~\ref{rem:hyp}, only hypothesis \textbf{H3} seems restrictive regarding the structure we are focused on and it gives a quite general framework, larger than $\mathscr{C}^1(\R_+^I)$ (which was discussed in \cite{dlmt}) for which this hypothesis is essentially a straighforward consequence of the entropy structure. 
Assumption \eqref{eq:H_separe} and the fact that $\mathcal{H}$ is a uniform entropy imply in particular for all $X > 0$
\begin{align*}
\textnormal{Diag}(\mathcal{H}_i''(x_i)) \D(A)(X)\geq \textnormal{Diag}(f_i(x_i)^2),
\end{align*}
where $f_i\in\mathscr{C}^0(\R_+^*,\R_+^*)$. To check \textbf{H3}, we first prove the partial result 
\begin{proposition}\label{prop:ouvert}
$A:(\R_+^*)^I \rightarrow (\R_+^*)^I$ is a $\mathscr{C}^1$-diffeomorphism.
\end{proposition}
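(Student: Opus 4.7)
The plan is to invoke Hadamard's global inverse function theorem, which states that a $\mathscr{C}^1$ local diffeomorphism between connected, simply-connected manifolds of the same dimension is a global diffeomorphism if and only if it is proper. Since $(\R_+^*)^I$ is diffeomorphic to $\R^I$ via the coordinate-wise logarithm, this is the natural framework, and it suffices to verify three properties of $A$: regularity, invertibility of the Jacobian at every point, and properness.

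Smoothness is immediate from \eqref{eq:p_i_reg} and \eqref{eq:pi_separe}: each $A_i(X) = p_i(X) x_i$ is $\mathscr{C}^1$ on $(\R_+^*)^I$. For pointwise invertibility of $\D A(X)$, I would exploit the uniform entropy condition. Under \eqref{eq:H_separe}, the inequality reads $\textnormal{Diag}(\mathcal{H}_i''(x_i)) \D(A)(X) \geq \textnormal{Diag}(f_i(x_i)^2)$, where the symmetric part of the (generally non-symmetric) product on the left-hand side dominates the positive-definite diagonal matrix on the right. Testing against the $i$-th coordinate vector yields $\mathcal{H}_i''(x_i)(\D A)_{ii}(X) \geq f_i(x_i)^2 > 0$, which forces $\mathcal{H}_i''(x_i) > 0$, so $\textnormal{Diag}(\mathcal{H}_i''(x_i))$ is invertible. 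Since the product has positive-definite symmetric part, it is itself invertible (any vector in its kernel would violate $v^\top M v > 0$), and therefore $\D A(X)$ is invertible.

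For properness, pick a compact $K \subset (\R_+^*)^I$ and choose constants $0 < m \leq M$ with $m \leq A_i(X) \leq M$ for every $X \in A^{-1}(K)$ and every $i$. Assumption \textbf{H2} provides $A_i(X) = p_i(X) x_i \geq \alpha x_i$, hence $x_i \leq M/\alpha$, so $A^{-1}(K)$ is bounded in $\R_+^I$. Since each $p_i$ is continuous on all of $\R_+^I$ (including the boundary), $p_i$ is bounded by some $P$ on the closure of this bounded set, which gives the lower bound $x_i = A_i(X)/p_i(X) \geq m/P > 0$. Thus $A^{-1}(K)$ is contained in a compact subset of $(\R_+^*)^I$ and is closed by continuity of $A$, hence compact.

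The main subtlety is really the non-symmetry of $\D^2(\mathcal{H}) \D(A)$: the matrix inequality in Definition \ref{def:ent:uni} must be interpreted on the symmetric part, and one has to verify that positive definiteness of the symmetric part is enough to conclude invertibility — which it is. Once these three checks are in place, Hadamard's theorem upgrades the local diffeomorphism to a global $\mathscr{C}^1$-diffeomorphism of $(\R_+^*)^I$ onto itself.
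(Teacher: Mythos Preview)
Your proof is correct and follows essentially the same route as the paper: both invoke the Hadamard--L\'evy theorem after conjugating by the coordinate-wise logarithm, obtain the upper bound on $A^{-1}(K)$ from \textbf{H2} and the lower bound from continuity of the $p_i$ on $\R_+^I$, and deduce local invertibility of $\D A$ from the uniform entropy inequality. Your handling of the Jacobian step is in fact slightly more careful than the paper's, as you explicitly address the non-symmetry of $\D^2(\mathcal{H})\D(A)$ and argue via the positive-definite symmetric part; the paper simply records the matrix inequality and concludes.
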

\begin{proof}
First, since $q_{ij}\geq 0$ and $d_i>0$, $A$ maps $(\R_+^*)^I$ to itself. Now, consider the map $\Phi:\R^I\rightarrow \R^I$ defined by 
\begin{align}
\label{eq:phi}\Phi(X) := \ln(A(\exp(X)),
\end{align}
where the functions $\ln$ and $\exp$ have both to be understood coordinate by coordinate when applied to a vector. We will invoke the Hadamard-Lévy Theorem, that we recall for the reader's convenience:
\begin{theorem}[Hadamard-L\'evy]
A $\mathscr{C}^1$ map $\Phi:\R^d\rightarrow\R^d$ is a $\mathscr{C}^1$-diffeormopshim if and only it is a proper map without critical points. 
\end{theorem}
Let us check that  $\Phi$ defined by \eqref{eq:phi} is proper. If $(\Phi(X^n))_n$ is bounded for some sequence $(X^n)_n \in\R^I$, this implies the existence of $0<\beta_1<\beta_2$ such that 
\begin{align*}
 \forall i=1,\dots I, \quad \beta_1 \leq A_i(\exp(X^n)) \leq \beta_2.
\end{align*}
 Using the hypothesis \textbf{H2}, we have the existence of $\alpha>0$, such that $A_i(\exp(X^n)) \geq \alpha \exp(x^n_i)$. In particular 
$$
0< \exp(x_i^n)\leq \frac{\beta_2}{\alpha}=:M.
$$
As an immediate consequence, we have 
$$
\beta_1\leq A_i(\exp(X^n))=p_i(\exp(X^n)) e^{x_i^n}\leq e^{x_i^n} \stackrel{:=C_i>0}{\overbrace{\sup_{X\in[0,M]^I} p_i(X)}}
$$
Leading immediately to
$$
\log\left(\frac{\beta_1}{C_i}\right)\leq x_i^n\leq M, 
$$
which ensures the boundedness of $X^n$ and thereby that $A$ is proper. Since the $p_i$ functions are all $\mathscr{C}^1$ on $(\R_+^*)^I$, so is $A$ on $(\R_+^*)^I$ and it has no critical points on this domain since we have by entropy assumption for $X>0$
\begin{align*}
\D^2(\mathcal{H})(X)\D(A)(U) \geq \textnormal{Diag}(f_i(x_i))^2 > 0.
 \end{align*}   Since both $\ln:(\R_+^*)^I \rightarrow \R^I$ and $\exp:\R^I \rightarrow (\R_+^*)^I$ are $\mathscr{C}^1$ without critical points, the chain rule leads to the same conclusion for $\Phi:\R^I \rightarrow \R^I$ and the Hadamard-L\'evy Theorem allows to conclude. \qed
\end{proof} 
\begin{remark}
The reader might notice that this part does not use the separability of variables. It is only essential is the sequel. 
\end{remark}
\vspace{2mm}

The remaining difficulty consists in the treatment of the boundary of $\R_+^I$. The idea consists in splitting it into the union of $\{0,\cdots 0\}^I$ and sets of the form $\Sigma:=V_1\times \cdots\times V_I$ where for all $V_i\in\{\{0\},\R_+^*\}$ (that is certain coordinates are frozen to $0$ and the other are positive). It is straightforward that $A^{-1}(X)\rightarrow 0$ as $X\rightarrow 0$ and that $A$ preserves any such set $\Sigma$. 
We prove here that $A$ induces an homeomorphism on such sets $\Sigma$. Since the idea of the proof is the same for any such set, we just establish it for $\Sigma=(\R_+^*)^{I-1}\times \{0\}$.  
The map $A:\R_+^I\rightarrow \R_+^I$ induces a map $\widetilde{A} : \R_+^{I-1} \rightarrow \R_+^{I-1} $ by the following formula for $\widetilde{X}:=(x_1,\cdots,x_{I-1}) \in \R_+^{I-1}$ and $1\leq i\leq I-1$
\begin{align*}
\widetilde{A}(\widetilde{X})_i := A(x_1,\cdots,x_{I-1}, 0)_i.
\end{align*}
The proper character of $\widetilde{A}$ on $\R_+^{I-1}$ is inherited from $A$. The map $\widetilde{A}$ is also differentiable (it is $\mathscr{C}^1$) on $\Sigma$ and has no singular points due to the entropy structure. Indeed, seeing $\widetilde{A}$ as a map on $(\R_+^*)^{I-1}$, we have 
$$
\begin{cases}
\D(\widetilde{A})(X)_{ii}=d_i+\sum_{j=1}^{I-1} a_{ij}q_j(x_j)+a_{iI}q_I(0)+a_{ii}x_iq_i'(x_i),\\
\D(\widetilde{A})(X)_{ij}=x_iq_j'(x_j),
\end{cases}
$$
so that 
$$
\D^2(\widetilde{\mathcal{H}})\D(\widetilde{A})>0,
$$
which ensures that $\widetilde{A}$  satisfies the hypothesis of the Hadamard Levy theorem. We thus recover that  $A$ is one to one on $\R_+^I$. This ensures that $A^{-1}$ is well defined on $\R_+^I$. We just need to prove its continuity. Let $X^n\rightarrow X$, then if $X\in(\R_+^*)^I$, we already know that $A^{-1}(X^n)\rightarrow A^{-1}(X)$. Otherwise, since $A$ is proper, we know that $A^{-1}(X^n)$ is bounded. We extract a convergent subsequence such that $A^{-1}(X^n)\rightarrow Y$ and since $A$ is one to one and continuous, the only possibility is $A(Y)=A(X)$ meaning that $A^{-1}(X^n)$ converges to $A^{-1}(X)$ which ends the proof.
 
\begin{remark}
The (small) generalization w.r.t. \cite{dlmt} concerns the third assumption which can be verified more easily without assuming $A$ is $\mathscr{C}^1$ up to the boundary. 
\end{remark}
\subsection{Proof of Theorem~\ref{thm:genjun}}
From the previous subsections we already know that hypothesis \textbf{H1}, \textbf{H2}, \textbf{H3} are fullfilled, and that $\mathcal{H}$ is a uniform entropy. We only need to check \eqref{ineq:H2} and \eqref{eq:equiint}. For the first one, we write 
$$
\nabla \mathcal{H}(X)\cdot R(X) =\sum_{i=1}^I \pi_i h_i'(x_i)x_i\left(r_i-\sum_{j=1}^I c_{ij}x_j^{\alpha_{ij}}\right)
$$
Before entering the computations, we recall a few properties on the functions $h_i$ associated to power type coefficients. Up to a multiplication by positive constants, we have for any $z\in\R_+$

$$
h_i(z)=\begin{cases}z\log z -z+ 1\text{ if }s_i=1,\\
 \dfrac{z^{s_i}-s_iz-(s_i-1)}{s_i(s_i-1)}\text{ otherwise. }\end{cases}
$$ 
It is easy to see that for some constant $C_i$ we have 
$$
-C_i\leq zh_i'(z)\leq C_i(h_i(z)+1)\qquad \text{ and }\quad u_i\leq C_i(1+h_i(u_i)).
$$
Therefore, we have immediately 
\begin{align*}
h_i'(x_i)x_i\left(r_i-\sum_{j=1}^I c_{ij}x_j^{\alpha_{ij}}\right)&\leq C_i\sum_{j=1}^I c_{ij}x_j^{\alpha_{ij}}+C_ir_i(h_i(x_i)+1).
\end{align*}
Using the hypothesis $\alpha_{ij}<1$, we have $x_i^{\alpha_{ij}}\leq 1+x_j$, leading to 
$$
\sum_{i=1}^I\pi_i h_i'(x_i)\left(r_i-\sum_j c_{ij}x_j^{\alpha_{ij}}\right)\leq C\left(1+\mathcal{H}(X)\right).
$$
This ends the verification of \eqref{ineq:H2}.
The hypothesis \eqref{eq:equiint} is just an immediate consequence of the fact that $\alpha_{ij}<1$. We can thus invoke Theorem~\ref{thm:main}, ending the proof of Theorem~\ref{thm:genjun} \qed
\section{Conclusion}
Apart from the generalization of the entropic structure from \cite{chen2016global}, the main message of this paper is that the approximation procedure designed in \cite{dlmt} is very robust as soon as system can be written in the form \eqref{eq:cross}.  The most technical points in practise are 
\begin{itemize}
\item checking that $U\mapsto (p_i(U)u_i)_i$ defines an homeomorphism on $\R_+^I$ (especially the treatment of the boundary),
\item checking that the gradient control arising from the entropy dissipation allows the extraction of a almost everywhere converging sequence. 
\end{itemize}
The approximation procedure preserves the main estimates of interests of the equation. Next step could be the treatment of potentially flat coefficients for which the entropy dissipation does not give directly a sufficient gradient estimate. A new approach will be needed in this case (possibly a perturbative approach) since the invertibility of $A$ is quite crucial in our process. 

\section*{Appendix}
We prove here Lemma \ref{lem:dualsum} which emphasizes the necessity of the assumption $p_i\in\mathscr{C}^0(\R_+^I)$ in our approach. In particular, this is why some examples are still resisting to our procedure, see \cite{Zamponi} for instance, and why it is not only a technical condition.  The proof can be seen as a bootstrap of the classical dual estimate and was actually already noticed in \cite{Lepoutre_JMPA}, Section 3, for the conservative case.

\vspace{2mm}

\emph{Proof of Lemma \ref{lem:dualsum}}

\vspace{2mm}

 Summing up the equations, we have, denoting $v=\sum_{i=1}^I u_i$ 
\begin{align*}
\partial_t v-\Delta \big[\mu v\big]\leq \rho v,
\end{align*}
where 
\begin{align*}
 \mu:=\frac{\sum_{i=1}^I p_i(U) u_i}{\sum_{i=1}^I u_i}.
\end{align*}
We thus infer  from lemma~\ref{lem:dualite}
\begin{align}\label{ineq:proof:dualsum}
\int_{Q_T} \mu v^2\leq C\left(1+\int_{Q_T} \mu\right),
\end{align}
where the constant $C$ depends on $\Omega,T,\rho$ and the initial data. For $R>0$, the continuity on $\R_+^I$ of the $p_i$ functions allow us to define
\begin{align*}
M(R)=\sup_{\sum_{i=1}^I x_i\leq R} \frac{\sum_{i=1}^I p_i(X)x_i}{\sum_{i=1}^I x_i},
\end{align*}
where the supremum is taken over all $X=(x_i)_{1\leq i \leq I})\in\R_+^I$. Using this function we write 
\begin{align*}
  \int_{Q_T} \mu = \int_{Q_T} \mu\mathbf{1}_{v>R} + \int_{Q_T} \mu\mathbf{1}_{v\leq R} \leq \frac{1}{R^2} \int_{Q_T} \mu v^2 + M(R)|Q_T|.
\end{align*}
Using the previous inequality for $R$ large enough in \eqref{ineq:proof:dualsum}, we get \eqref{ineq:dualsum1}. 
\qed

\medskip

{\bf{Acknowledgement}}: 
The research leading to this paper was funded by the french "ANR blanche" project Kibord: ANR-13-BS01-0004.
\medskip

\bibliographystyle{alpha}
\bibliography{LM} 
\end{document}